\documentclass[11pt,twoside]{amsart}
\usepackage{amssymb,amsmath,amsthm, amscd, enumerate, mathrsfs, calligra}
\usepackage[all]{xy}
\usepackage{comment}
\usepackage{color}

%\usepackage[dvipdfmx]{hyperref,graphicx}
% for hyperref
%\usepackage{pxjahyper}
%\hypersetup{
%	bookmarks=true,
%	bookmarksnumbered=true,
%	pdfborder={0 0 0},
%	bookmarkstype=toc
%}

\title[Zariski density for surfaces]
{Zariski density of points with maximal arithmetic degree for surfaces}
\author[K.~Sano, T.~Shibata]
{Kaoru Sano, Takahiro Shibata}
\date{}
\keywords{dynamical degree, arithmetic degree}
\subjclass[2010]{Primary 37P55, Secondary 14G05}

%Kaoru Sano
\address{Faculty of Science and Engineering, Doshisha University, Kyoto, 610-0394, Japan.}
\email{kaosano@mail.doshisha.ac.jp}

%Takahiro Shibata
\address{National University of Singapore, Singapore 119076, Republic of Singapore}
\email{mattash@nus.edu.sg}

%%%%%%%%%%%%%%%%%%%%%%%%%%%
% Local abbreviations

\DeclareMathOperator{\sHom}{\mathscr{H}\kern -.3pt \mathit{om}}

\DeclareMathOperator{\Aut}{Aut}

\DeclareMathOperator{\NS}{NS}
\DeclareMathOperator{\N}{N}

\DeclareMathOperator{\Gal}{Gal}

\newcommand{\C}{\mathbb{C}}

\newcommand{\PP}{\mathbb{P}}

\newcommand{\Q}{\mathbb{Q}}

\newcommand{\R}{\mathbb{R}}

\newcommand{\Z}{\mathbb{Z}}

\newenvironment{parts}[0]{%
  \begin{list}{}%
    {\setlength{\itemindent}{0pt}
     \setlength{\labelwidth}{1.5\parindent}
     \setlength{\labelsep}{.5\parindent}
     \setlength{\leftmargin}{2\parindent}
     \setlength{\itemsep}{0pt}
     }%
   }%
  {\end{list}}
\newcommand{\Part}[1]{\item[\upshape#1]}

%%%%%%%%%%%%%%%%%%%%%%%%%%%%%%
% Definitions for new environments
% theorem style plain --- default
\newtheorem{thm}{Theorem}[section]
\newtheorem{lem}[thm]{Lemma}

\newtheorem{prop}[thm]{Proposition}
\newtheorem{conj}[thm]{Conjecture}
\newtheorem{ques}[thm]{Question}

\theoremstyle{definition}
\newtheorem{defn}[thm]{Definition}
\newtheorem{rem}[thm]{Remark}

\newtheorem*{ack}{Acknowledgments}

\newtheorem*{notation}{Notation and Conventions}

%%%%%%%%%%%%%%%%%%%%%%%%%%%
%
%
%
%
\begin{document}
\begin{abstract}
We prove that any surjective self-morphism $f$ with $\delta_f > 1$ on a potentially dense smooth projective surface defined over a number field $K$ has densely many $L$-rational points for a finite extension $L/K$.
\end{abstract}

\maketitle

\setcounter{tocdepth}{1}
\tableofcontents
\section{Introduction}\label{sec_intro}
Let $X$ be a projective variety defined over a number field $K$ and
$f \colon X \longrightarrow X$ a surjective self-morphism over $K$.
Then we can define two dynamical quantities,
the dynamical degree $\delta_f$ and the arithmetic degree $\alpha_f(x)$ at a point $x\in X(\overline{K})$,
see Section \ref{sec: def} for the definitions.

Relationships of those two quantities are studied in several papers.
The following result is fundamental.
\begin{thm}[{\cite[endomorphism case]{KS16b}}]
Let $X$ be a projective variety over a number field $K$,
and $f\colon X\longrightarrow X$ a surjective self-morphism over $K$.
Then the limit defining $\alpha_f(x)$ converges and the inequality $\alpha_f(x) \leq \delta_f$ holds for all $x\in X(\overline{K})$.
\end{thm}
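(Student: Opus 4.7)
The plan is to control the growth of $h_H^+(f^n(x))$ by the spectral radius of the pullback operator $f^*$ acting on $\NS(X)_{\R}$, where $H$ is a fixed ample divisor on $X$ with $d=\dim X$. Since $f$ is a surjective self-morphism, $f^*$ preserves the nef cone of $\NS(X)_{\R}$. A Perron--Frobenius type argument for cone-preserving linear maps then shows that the spectral radius $\rho(f^*)$ is attained on a nef class, and one checks directly from the definition $\delta_f = \lim_n ((f^n)^*H \cdot H^{d-1})^{1/n}$ that $\rho(f^*) = \delta_f$.

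Next I would fix a basis of $\NS(X)_{\R}$ consisting of differences of ample classes $D_1,\dots,D_r$, each equipped with a chosen Weil height $h_{D_i}$. Given $\epsilon > 0$, there is a norm on $\NS(X)_{\R}$ with $\|f^*\| \le \delta_f + \epsilon$, so the coefficients of $(f^n)^*H$ in this basis are bounded by $C_\epsilon(\delta_f + \epsilon)^n$. Combining this with functoriality of heights, $h_H\circ f = h_{f^*H} + O(1)$, and additivity $h_{D+D'} = h_D + h_{D'} + O(1)$ of the height machine yields an inequality
\[
h_H^+(f^n(x)) \le C'_{\epsilon}(\delta_f + \epsilon)^n h_A^+(x) + O(n)
\]
for a suitable fixed ample divisor $A$. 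Taking $n$-th roots and sending $n\to\infty$ gives $\limsup_n h_H^+(f^n(x))^{1/n} \le \delta_f + \epsilon$, and letting $\epsilon\to 0$ establishes the inequality for the upper limit, which already suffices for the $\le \delta_f$ bound once the limit is known to exist.

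The main obstacle is promoting the $\limsup$ to an honest limit. I would attack this by refining the norm estimate through the Jordan decomposition of $f^*$ on $\NS(X)_{\R}$: the matrix entries of $(f^*)^n$ in a Jordan basis grow like $|\rho_i|^n n^{k_i}$ for the various eigenvalues $\rho_i$ and associated block sizes, so $h_H^+(f^n(x))$ is sandwiched between quantities of the form (polynomial in $n$) $\times\, |\rho_i|^n$, whose $n$-th roots converge to $|\rho_i|$. The delicate point is showing that exactly one eigenvalue modulus governs the asymptotics of $h_H^+(f^n(x))$ despite the $O(1)$ ambiguity in the height machine and the possibility that $h_{D_i}(f^n(x))$ changes sign. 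Carrying this out carefully will identify the limit as $|\rho_i|$ for some eigenvalue of $f^*$, giving both the existence of the limit and the bound $\alpha_f(x) \le \rho(f^*) = \delta_f$.
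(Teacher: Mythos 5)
The paper does not prove this statement at all --- it is quoted from \cite{KS16b} --- so the only question is whether your argument would stand on its own. The first half does: bounding $\limsup_n h_H^+(f^n(x))^{1/n}$ by $\rho(f^*)+\epsilon$ via a norm on $\NS(X)_\R$ with $\|f^*\|\le\delta_f+\epsilon$, iterating $h_H\circ f=h_{f^*H}+O(1)$, and using $\rho(f^*)=\delta_f$ for a morphism is exactly the standard Kawaguchi--Silverman upper-bound argument, and it is correct (modulo the usual bookkeeping that the $O(1)$ errors accumulate into a geometric series, and that numerically trivial classes contribute only $O(\sqrt{h_H})$, which is harmless here).

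The genuine gap is in the second half, the existence of the limit, which is the substantive content of the cited theorem. Your plan is to sandwich $h_H^+(f^n(x))$ between Jordan-block growth rates $n^{k_i}|\rho_i|^n$, and you yourself flag ``the delicate point'' --- that one eigenvalue modulus governs the asymptotics despite $O(1)$ ambiguities and sign changes --- but you never resolve it, and as stated the sandwich cannot be carried out. Heights attached to the non-ample generalized eigenclasses are defined only up to bounded functions, take both signs, and for complex eigenvalues oscillate in argument, so decomposing $h_H$ along a Jordan basis gives an upper bound but no lower bound: a priori massive cancellation could make $h_H(f^n(x))$ small along a subsequence, destroying convergence of the $n$-th roots. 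The actual proof in \cite{KS16b} removes this obstruction by \emph{constructing} dynamical canonical heights for each Jordan block with eigenvalue of modulus $>1$ (a telescoping-limit construction that eliminates the $O(1)$ ambiguity and produces exact functional equations $\hat h_j\circ f=\lambda\hat h_j+\hat h_{j-1}$), then using the fact that every such height is dominated by a multiple of $h_H$ to convert nonvanishing of the canonical-height vector at $x$ into a matching lower bound $h_H(f^n(x))\gg n^{k}|\lambda|^n$, and showing that if all these canonical heights vanish at $x$ then the orbit heights grow subexponentially, so $\alpha_f(x)=1$; the contribution of $\operatorname{Pic}^0$ is handled separately via heights on the Albanese. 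Without this (or an equivalent device), your outline establishes only $\limsup_n h_H(f^n(x))^{1/n}\le\delta_f$, not the convergence asserted in the theorem.
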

See \cite[Theorem 26]{KS16a} and \cite[Theorem 1.4]{Mat16} for the case that $f$ is a dominant rational map.

Matsuzawa, Meng, Zhang, and the second author gave the following conjecture in \cite{MMSZ20}.

\begin{conj}\label{conj: sAND}
	Let $X$ be a projective variety over a number field $K$,
	$f\colon X \longrightarrow X$ a surjective morphism, and
	$d > 0$ a positive integer.
	Then the set
	\[
		Z_f(d) := \{ x \in X(\overline{K}) \ |\ [K(x):K] \leq d, \alpha_f(x) < \delta_f\}
	\]
	is not Zariski dense.
\end{conj}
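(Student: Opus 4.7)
The plan is a Call--Silverman / Kawaguchi--Silverman canonical-height argument. Assuming toward contradiction that $Z_{f}(d)$ is Zariski dense in $X$, I would build a nonnegative height function on $X(\overline{K})$ that vanishes on $Z_{f}(d)$, and then invoke Northcott's finiteness theorem for points of bounded degree and bounded height to force $Z_{f}(d)$ into a proper Zariski-closed subset, contradicting the density assumption. The argument is dimension-agnostic in its setup, although its completion in full generality is what keeps the conjecture open.

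The first step is a Perron--Frobenius analysis of $f^{*}$ acting on $\NS(X)_{\R}$. Because the nef cone is preserved by $f^{*}$, a cone version of Perron--Frobenius yields a nonzero nef class $D\in\NS(X)_{\R}$ with $f^{*}D=\delta_{f}\cdot D$. If $D$ can be chosen \emph{big}, the averaged limit $\hat{h}_{D}(x):=\lim_{n\to\infty}\delta_{f}^{-n}h_{D}(f^{n}(x))$ — extended to $\R$-divisors to accommodate the possibility $\delta_{f}\notin\Q$ — should define a canonical height satisfying $\hat{h}_{D}\circ f=\delta_{f}\cdot\hat{h}_{D}$ and differing from a Weil height for $D$ by a controlled error. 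The central technical point to verify is that $\alpha_{f}(x)<\delta_{f}$ forces $\hat{h}_{D}(x)=0$. Zariski density of $Z_{f}(d)$ would then make $\{\hat{h}_{D}=0\}$ Zariski dense; bigness of $D$ converts the vanishing of $\hat{h}_{D}$ into a bound on ordinary height, and Northcott finiteness for $[K(x):K]\leq d$ extracts a proper Zariski closed subset containing $Z_{f}(d)$, giving the contradiction.

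The genuinely hard case — and the reason the conjecture is open in arbitrary dimension — is when every nef $\delta_{f}$-eigenvector lies on the boundary of $\Nef(X)$ and fails to be big. Then $\hat{h}_{D}$ can vanish identically on $X(\overline{K})$ and is no longer a useful separator. The remedy I would pursue is structural: the Iitaka fibration attached to the nef class $D$ should be automatically $f$-equivariant (because $D$ is an eigenvector of $f^{*}$), producing a surjection $\pi\colon X\to Y$ with $\dim Y<\dim X$ together with a descended surjective endomorphism $g\colon Y\to Y$ satisfying $\delta_{g}=\delta_{f}$, whereupon the conjecture on $X$ reduces to the same conjecture on $Y$ (by induction on $\dim X$) supplemented by a fiberwise analysis over $Y$. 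The base $\dim X=1$ is immediate from Northcott.

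The principal obstacle is engineering this descent uniformly. One must verify that the canonical-height degeneracy is genuinely fibered rather than being concentrated on an exotic $f$-invariant subvariety of intermediate dimension, that the descended $g$ on $Y$ really inherits $\delta_{g}=\delta_{f}$ (subtle when $D$ is only an $\R$-divisor or when the fibration is not clean in codimension one), and that the bounded-degree condition $[K(x):K]\leq d$ translates into a usable condition on $\pi(x)\in Y$ together with effective control of the generic fiber of $\pi$. Carrying out all three uniformly for every surjective endomorphism appears to require an $f$-equivariant minimal model programme, which is exactly what is not yet available in higher dimension; this is why the paper specializes to smooth projective surfaces, where the Enriques--Kodaira classification plays the role of the missing equivariant MMP and delivers the needed $f$-equivariant fibration case by case.
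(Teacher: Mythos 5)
You have set out to prove the statement labelled Conjecture \ref{conj: sAND}, but note that the paper does not prove it: it is an open conjecture quoted from \cite{MMSZ20} as motivation, and the paper's actual theorem is the complementary statement (Zariski density of points \emph{with} maximal arithmetic degree on surfaces with $\delta_f>1$), so there is no proof in the paper to compare against. Your proposal, by your own admission, is a strategy outline rather than a proof, and the gap it leaves is exactly the one that keeps the conjecture open. The part you can make rigorous is the case where some nef $\delta_f$-eigenclass is big: there one can build the Call--Silverman canonical height $\hat{h}_D$, show $\alpha_f(x)<\delta_f$ forces $\hat{h}_D(x)=0$, and use bigness ($h_D\geq \epsilon h_H - O(1)$ off the augmented base locus) plus Northcott for bounded degree to trap $Z_f(d)$ in a proper closed subset; this is essentially the known partial result. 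But your proposed remedy in the non-big case does not work as stated.

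Concretely: (i) a nef $\R$-class $D$ on the boundary of the nef cone need not define any Iitaka fibration --- Iitaka dimension is not a numerical invariant and a non-big nef eigenclass can have Iitaka dimension $0$ or $-\infty$ (it need not be effective, semiample, or even pseudo-effective up to the needed positivity), so the asserted $f$-equivariant surjection $\pi\colon X\to Y$ need not exist; (ii) even when an equivariant fibration does exist, the descended endomorphism $g$ generally satisfies $\delta_f=\max$ of $\delta_g$ and the relative dynamical degree, so $\delta_g=\delta_f$ can fail --- the eigenvalue may live entirely in the fiber direction, and then induction on $\dim Y$ says nothing about $Z_f(d)$; (iii) the ``fiberwise analysis'' you defer is not a routine supplement: points of degree $\leq d$ over $K$ do not map to points of controlled degree on the fibers in a way that lets you reapply Northcott, and on the fibers you would need precisely the statement being proved. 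Finally, your closing claim that the paper handles the surface case of this conjecture via Enriques--Kodaira classification is a misreading: the classification is used there to prove the $(DR)$ property of Theorem \ref{thm: main}, not Conjecture \ref{conj: sAND}. So the proposal is not a proof, and its central reduction step fails in general.
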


Roughly speaking, Conjecture \ref{conj: sAND} says that the set of points with non-maximal arithmetic degree is small.

On the other hand, the set of points with maximal arithmetic degree should be large.
To give a precise statement, we prepare the notion of densely many rational points with maximal arithmetic degree.

\begin{defn}
	Let $X$ be a projective variety over a number field $K$ and
	$f\colon X \longrightarrow X$ a surjective self-morphism over $K$.
	Fix an algebraic closure $\overline{K}$ of $K$ and let $L$ be an intermediate field extension $\overline{K} / L / K$.
	We say that \textit{$(X,f)$ has densely many $L$-rational points with maximal arithmetic degree}
	if there is a subset $S\subset X(L)$ satisfying the following conditions:
	\begin{parts}
		\Part{(1)} $S$ is Zariski dense,
		\Part{(2)} the equality $\alpha_f(x) = \delta_f$ holds for all $x \in S$, and
		\Part{(3)} $O_f(x) \cap O_f(y) = \emptyset$ for any distinct two points $x,y \in S$, where $O_f(x)$ is the forward $f$-orbit $\{f^n(x)\ |\ n \geq 0 \}$ of $x$.
	\end{parts}
	If $(X,f)$ has densely many $L$-rational points with maximal arithmetic degree, we also say that \textit{$(X,f)$ satisfies $(DR)_L$}, for abbreviation.
	If there is a finite extension $L/K$ such that $(X,f)$ satisfies $(DR)_L$,
	we say that \textit{$(X,f)$ satisfies $(DR)$}.
\end{defn}

In \cite{SS20}, the authors proved that
for a surjective self-morphism $f\colon X \longrightarrow X$ on a projective variety $X$ both defined over a number field $K$,
$(X,f)$ has densely many $\overline{K}$-rational points with maximal arithmetic degree.
The authors also gave the following question:

\begin{ques}\label{ques: potential}
Let $X$ be a projective variety and $f \colon X \longrightarrow X$ a surjective self-morphism.
If $X$ has potentially dense rational points i.e.~there is a finite extension $L/K$
such that $X(L)$ is Zariski dense,
then does $(X,f)$ satisfy $(DR)$?
\end{ques}

In fact, if $X$ is either a unirational variety, an abelian variety, or a $\PP^1$-bundle over an elliptic curve,
the authors gave affirmative answers for Question \ref{ques: potential} in \cite{SS20}.
The main result of this paper is that Question \ref{ques: potential} is affirmative if $X$ is a smooth projective surface and $f\colon X\longrightarrow X$ is a surjective self-morphism with $\delta_f > 1$.

\begin{thm}\label{thm: main}
Let $K$ be a number field, $X$ a smooth projective surface over $K$ having potentially dense rational points,
and $f\colon X \longrightarrow X$ a surjective self-morphism with $\delta_f>1$.
Then $(X,f)$ satisfies $(DR)$.
\end{thm}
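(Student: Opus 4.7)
The plan is to classify the pairs $(X, f)$ arising in the theorem and handle each class either by reducing to cases treated in [SS20] or by a direct canonical-height argument.

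First I would invoke the classification of smooth projective surfaces admitting a surjective self-morphism with $\delta_f > 1$. The condition $\delta_f > 1$ forces $\kappa(X) \le 0$, since on a surface of nonnegative Kodaira dimension the action of $f^*$ on $\NS(X)_\R$ essentially preserves the canonical class, which is incompatible with spectral radius greater than one. Combining the Enriques--Kodaira classification with results of Fujimoto--Nakayama on non-isomorphic endomorphisms and of Cantat on positive-entropy automorphisms of surfaces, one deduces that after an $f$-equivariant contraction of $f$-periodic $(-1)$-curves (and possibly replacing $f$ by an iterate) $X$ is one of: (a) a rational surface, (b) a $\PP^1$-bundle over an elliptic curve, (c) an abelian surface, (d) a bielliptic surface, (e) a K3 surface, or (f) an Enriques surface; the last two arise only when $f$ is an automorphism of positive entropy.

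Cases (a), (b), and (c) follow immediately from [SS20], noting that rational surfaces are unirational. For case (d), write $X = A/G$ with $A$ an abelian surface and $G$ a finite group acting freely; after replacing $f$ by an iterate it lifts to $\tilde f \colon A \to A$ with $\delta_{\tilde f} = \delta_f$. Applying the abelian case of [SS20] to $(A, \tilde f)$ produces a Zariski dense set $\tilde S \subset A(L)$ of points with maximal arithmetic degree; its image under the quotient $\pi \colon A \to X$ is then Zariski dense in $X(L)$ and still has $\alpha_f \equiv \delta_f$ because arithmetic degree is preserved by finite morphisms, and a standard thinning yields pairwise disjoint orbits. Case (f) reduces to (e) by passing to the K3 double cover after a further iteration lifting $f$.

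The main obstacle is case (e), $X$ a K3 surface with $f \in \Aut(X)$ of positive entropy. Applying the Hodge index theorem to the $f^*$-eigen-decomposition of $\NS(X)_\R$ produces nef classes $\theta^\pm$ with $f^*\theta^\pm = \delta_f^{\pm 1}\theta^\pm$ and $(\theta^+ \cdot \theta^-) > 0$, so $\theta := \theta^+ + \theta^-$ is big and nef. Silverman's canonical-height construction then yields nonnegative heights $\hat h_f^\pm$ with $\hat h_f^\pm \circ f = \delta_f^{\pm 1} \hat h_f^\pm$, and one checks that $\alpha_f(x) = \delta_f$ whenever $\hat h_f^+(x) > 0$. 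Since $\hat h_f^+ + \hat h_f^-$ dominates a height in the big class $\theta$, Northcott's theorem shows that for every number field $L$ the set $\{x \in X(L) : \hat h_f^+(x) = \hat h_f^-(x) = 0\}$ is finite and consists of $f$-preperiodic points. Potential density supplies $L$ with $X(L)$ Zariski dense; removing this finite exceptional set and thinning to pairwise disjoint orbits yields the required $S$. The delicate point is verifying that $\hat h_f^+ + \hat h_f^-$ really dominates an ample height globally on $X(\overline K)$, which rests on the bigness of $\theta$ and a careful comparison between heights attached to big nef classes and to ample classes.
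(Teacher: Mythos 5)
Your reduction step contains a genuine gap: the claim that $\delta_f>1$ forces $\kappa(X)\le 0$ is false, and with it your classification omits the hardest case of the theorem. On a properly elliptic surface ($\kappa(X)=1$) a non-isomorphic surjective self-morphism $f$ preserves the elliptic fibration $\pi\colon X\to B$ and descends to a finite-order automorphism of $B$, so $f^\ast$ does fix the (numerical) canonical class, which is proportional to the fibre class $F$; but this in no way bounds the spectral radius of $f^\ast$ on $\NS(X)_\R$. Indeed, if after an iterate $f$ acts over $B$ with relative degree $d>1$ (a fibrewise isogeny/multiplication map), then for a horizontal class $H$ one has $(f^\ast H\cdot F)=d\,(H\cdot F)$, and log-concavity of degrees gives $\delta_f^2\ge \deg f=d$, so $\delta_f>1$ while $f^\ast K_X\equiv K_X$. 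These surfaces are exactly Fujimoto's remaining class with $\kappa\ge 0$, and the paper devotes its Case (III) to them; it is the only place where the hypothesis of potential density is used in an essential, non-formal way. The paper's treatment is not a reduction to \cite{SS20}: it combines Merel's uniform boundedness of torsion (to show that density of $X(L)$ forces infinitely many fibres $X_b$, $b\in B(L)$, to have infinitely many $L$-points) with the fibrewise canonical-height result (Lemma \ref{lem: polarized}, via Call--Silverman), noting that on a good fibre $f|_{X_b}$ is polarized with $\delta_{f|_{X_b}}=\delta_f$ and that orbits stay in fibres, so disjointness of orbits is automatic. Nothing in your proposal covers this case, and no formal reduction to rational, abelian, bielliptic, K3 or Enriques surfaces can, because these elliptic surfaces are birationally none of those.

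Two further remarks. In your K3 case (e), the assertion that Northcott makes $\{x\in X(L): \hat h_f^+(x)=\hat h_f^-(x)=0\}$ finite is not correct as stated: $\theta^++\theta^-$ is big and nef but generally not ample, and $\hat h_f^++\hat h_f^-$ dominates an ample height only away from the finitely many $f$-periodic curves (Kawaguchi), on which there may be infinitely many $L$-points with vanishing canonical heights. This is fixable (discard the periodic curves, a proper closed subset, before thinning), and the resulting argument is a legitimate alternative to the paper's route, which instead verifies condition (\dag) of Proposition \ref{prop: dag} using Bogomolov--Tschinkel's infinitely many rational curves to produce non-periodic rational curves on which $D^+$ is ample, thereby getting $(DR)_K$ without invoking potential density for K3's. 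Your handling of the rational, $\PP^1$-bundle, abelian, bielliptic and Enriques cases matches the paper's reductions (Lemmas \ref{lem: genefin reduction} and \ref{lem: iterate reduction}); but as it stands the proposal does not prove the theorem because of the missing $\kappa(X)=1$ case.
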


The idea of the proof is as follows.
Replacing the self-morphism by its iteration, a self-morphism on the minimal model is induced.
So we may assume that the given surface is minimal.
Since the case of abelian varieties and unirational varieties is proved in \cite{SS20},
the remaining case is automorphisms of K$3$ surfaces and non-isomorphic surjective self-morphisms of elliptic surfaces.
These cases are treated in case-by-case analysis.

The outline of this paper is as follows.
In Section \ref{sec: def}, we prepare some notation and definitions which we use in this paper.
In Section \ref{sec: lem}, lemmata to be used in the proof of Theorem \ref{thm: main} are prepared.
We prove Theorem \ref{thm: main} for automorphisms of K$3$ surfaces and non-isomorphic surjective self-morphisms of elliptic surfaces
in Section \ref{subsec: lem for autom} and Section \ref{subsec: non-isom}, respectively.

\begin{ack}
The authors thank Professor Shu Kawaguchi for giving valuable comments and suggesting them writing this paper, and Professor Joseph Silverman for reading a draft and giving valuable comments.
The first author is supported by JSPS KAKENHI Grant Number JP20K14300.
The second author is supported by a Research Fellowship of NUS.
This work was supported by the Research Institute for Mathematical Sciences,
an International Joint Usage/Research Center located in Kyoto University.
\end{ack}

%%%%%%%%%%%%%%%%%%%%%%%%%%%%
%定義とノーテーション
%%%%%%%%%%%%%%%%%%%%%%%%%%%%
\section{Definitions and Notation}\label{sec: def}

\begin{notation}
\begin{itemize}
$ \, $
\item Throughout this article, we work over a fixed number field $K$.
We fix an algebraic closure $\overline K$ of $K$.

\item A \textit{variety} means a geometrically integral separated scheme of finite type over $K$.

\item The symbols $\sim$ (resp.~$\sim_{\mathbb Q}$, $\sim_{\mathbb R}$) and
$\equiv$  mean
the linear equivalence (resp.~$\mathbb Q$-linear equivalence,
$\mathbb R$-linear equivalence) and the numerical equivalence on divisors.

\item Let $\mathbb K= \mathbb Q, \mathbb R$ or $\mathbb C$.
For a $\mathbb K$-linear endomorphism $\phi: V \to V$ on a $\mathbb K$-vector space $V$,
$\rho(\phi)$ denotes the {\it spectral radius} of $f$, that is,
the maximum of absolute values of eigenvalues (in $\C$) of $\phi$.

\item Though the definition of the dynamical degree of dominant rational self-map is known,
we need them only for surjective self-morphisms in this paper.
Let $f \colon X \dashrightarrow X$ be a dominant rational map on a projective variety.
We define the (\textit{first}) \textit{dynamical degree $\delta_f$ of $f$} as
\[
\delta_f=\lim_{n \to \infty} ((f^n)^*H \cdot H^{\dim X-1})^{1/n}.
\]

Let $f^*:\NS(X)_\R \to \NS(X)_\R$ be the pull-back action on the space of
numerical classes of $\R$-Cartier divisors on $X$.
If $f$ is a morphism, then $\delta_f=\rho(f^*)$, 
so $\delta_f$ is an algebraic integer.

\item Let $X$ be a projective variety.
For an $\mathbb R$-Cartier divisor $D$ on $X$,
a function $h_D: X(\overline{K}) \to \mathbb R$ is determined up to the
difference of a bounded function.
$h_D$ is called the \textit{height function associated to $D$}.
For definition and properties of height functions, see e.g.~\cite[Part B]{HS00} or \cite[Chapter 3]{Lan83}.

\item Let $X$ be a projective variety and $f: X \longrightarrow X$ a surjective self-morphism.
Fix an ample height function $h_H \geq 1$ on $X$.

\item
For $x \in X(\overline K)$, we define
\[
\alpha_f(x)=\limsup_{n \to \infty} h_H(f^n(x))^{1/n},
\]
which we call the \textit{arithmetic degree of $f$ at $x$}.
The convergence defining the arithmetic degree is known
 (cf.~\cite{KS16b}).
Moreover, the arithmetic degree is independent of the choice of $H$ and $h_H$.
\end{itemize}
\end{notation}

\begin{rem}
Dynamical degrees have the following invariance: 
if $\pi \colon X \dashrightarrow X'$
is a dominant rational map between projective varieties of the same dimension,
and $f \colon X\dashrightarrow X$ and $f'\colon X' \dashrightarrow X'$ are 
dominant rational maps satisfying $\pi \circ f = f' \circ \pi$, 
then the equality $\delta_f = \delta_{f'}$ holds.
For details on dynamical degrees, see \cite{Tru20}.
\end{rem}

%%%%%%%%%%%%%%%%%%%%%%%%%%%%
%補題達
%%%%%%%%%%%%%%%%%%%%%%%%%%%%
\section{Lemmata} \label{sec: lem}
In this section, we list lemmata used in the next section.

%%%%%%%%%%%%%%%%%%%%%%%%%%%%
%\subsection{Reductions} \label{subsec: lem for red}
%%%%%%%%%%%%%%%%%%%%%%%%%%%%
\begin{lem}\label{lem: invariance} %generically finite invariance of alpha, delta
	Consider the following commutative diagram
	\[
	\xymatrix{
	X \ar[r]^{f_X} \ar[d]_{\pi} & X \ar[d]^{\pi}\\
	Y\ar[r]_{f_Y} &Y,
	}
	\]
	where $X,Y$ are smooth projective varieties and $f_X$, $f_Y$ are surjective self-morphisms.
	Suppose that there exists a non-empty open subset $U\subset Y$ such that
	$\pi \colon V:= \pi^{-1}(U) \longrightarrow U$ is finite.
	Let $x\in X(\overline{K})$, $y:= \pi(x) \in Y(\overline{K})$, $O_{f_X}(x) \subset V$, $O_{f_Y}(y) \subset U$.
	Then the equality $\alpha_{f_X}(x) = \alpha_{f_Y}(y)$ holds.
\end{lem}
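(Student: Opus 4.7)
The plan is to deduce the equality of arithmetic degrees from a two-sided height comparison along the orbit. Fix ample divisors $H_X$ on $X$ and $H_Y$ on $Y$ with associated Weil heights $h_{H_X},h_{H_Y}\ge 1$. By commutativity of the diagram one has $f_Y^n(y)=\pi(f_X^n(x))$ for every $n\ge 0$, and functoriality of the height machine gives $h_{H_Y}(\pi(z))=h_{\pi^*H_Y}(z)+O(1)$ on $X(\overline{K})$. Consequently $\alpha_{f_Y}(y)=\limsup_{n\to\infty}h_{\pi^*H_Y}(f_X^n(x))^{1/n}$, and the problem reduces to comparing $h_{H_X}$ and $h_{\pi^*H_Y}$ along the orbit $O_{f_X}(x)\subset V$.

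The upper estimate $\alpha_{f_Y}(y)\le\alpha_{f_X}(x)$ is the easy half: since $H_X$ is ample, I would pick $c>0$ with $cH_X-\pi^*H_Y$ ample, hence $\Q$-linearly equivalent to an effective divisor, so $h_{\pi^*H_Y}(z)\le c\cdot h_{H_X}(z)+O(1)$ on all of $X(\overline{K})$; taking $(\cdot)^{1/n}$ limsup along the orbit yields the inequality.

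The substantive half is the reverse inequality $\alpha_{f_X}(x)\le\alpha_{f_Y}(y)$. The key geometric input is that finiteness of $\pi|_V$ forces $\pi|_W$ to be generically finite for every subvariety $W\subset V$, so by Nakamaye's null-locus description the augmented base locus of the big divisor $\pi^*H_Y$ is disjoint from $V$. Using the standard characterization of the augmented base locus (Ein--Lazarsfeld--Mustata--Nakamaye--Popa) together with Noetherianity of the Zariski topology, I would then produce finitely many $\Q$-linear equivalences $m_i\pi^*H_Y\sim_{\Q}A_i+E_i$ with $A_i$ ample, $E_i$ effective, and $\bigcap_i\Supp(E_i)\cap V=\emptyset$. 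For each $z\in V(\overline{K})$ some index $i$ has $z\notin\Supp(E_i)$; the standard lower bound $h_{E_i}(z)\ge -O(1)$ off the support, combined with the ampleness-driven estimate $h_{A_i}(z)\ge c'_i\,h_{H_X}(z)-O(1)$, produces $h_{\pi^*H_Y}(z)\ge c''_i\,h_{H_X}(z)-O(1)$. Taking the minimum over this finite family yields uniform constants $c,C>0$ with $h_{H_X}(z)\le c\,h_{\pi^*H_Y}(z)+C$ for every $z\in V(\overline{K})$; applying this along the orbit and passing to $(\cdot)^{1/n}$ limsup completes the argument. The main obstacle is precisely this uniform lower height comparison, which requires the geometric fact that $\pi^*H_Y$ is positive enough on $V$ to produce explicit ample/effective decompositions covering $V$ --- alternatively one could factor $\pi$ via Zariski's main theorem as a proper birational morphism onto the normalization of $Y$ in $K(X)$ followed by a finite morphism, reducing the lower bound to the standard birational case where the exceptional locus sits inside $X\setminus V$.
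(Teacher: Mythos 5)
Your argument is correct, but it takes a genuinely different route from the paper: the paper gives no direct argument at all, observing only that the limits defining the arithmetic degrees exist for morphisms and then quoting \cite[Lemma 2.8]{MS20}, of which the asserted equality is a special case. You instead prove it from scratch: functoriality of heights reduces everything to comparing $h_{H_X}$ and $h_{\pi^*H_Y}$ along the orbit; the inequality $\alpha_{f_Y}(y)\le\alpha_{f_X}(x)$ is the easy nef-versus-ample comparison; and the reverse inequality rests on a uniform bound $h_{H_X}\le c\,h_{\pi^*H_Y}+O(1)$ on $V(\overline{K})$, obtained from $B_+(\pi^*H_Y)\cap V=\emptyset$ (Nakamaye's null-locus theorem) together with finitely many decompositions $m_i\pi^*H_Y\sim_{\Q}A_i+E_i$ and the standard local height inequalities. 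Two small points should be tightened: (i) bigness of $\pi^*H_Y$ is not a hypothesis --- it follows because $V$ is nonempty (it contains $x$), hence dense in the irreducible $X$, so $\pi$ is generically finite onto an image of dimension $\dim X$ and $(\pi^*H_Y)^{\dim X}>0$; in particular the argument survives even when $\pi$ is not dominant, but you should say so (or first replace $Y$ by $\pi(X)$); (ii) in the null-locus step you must exclude positive-dimensional subvarieties $W$ with $(\pi^*H_Y)^{\dim W}\cdot W=0$ that merely \emph{meet} $V$, not only those contained in $V$; this is immediate because $W\cap V\to U$ is finite, so $\dim\pi(W)=\dim W$, but your phrasing ``for every subvariety $W\subset V$'' does not literally cover it. Compared with the paper's citation, your proof (or its lighter variant via Stein factorization and Zariski's main theorem, which you also sketch and which avoids Nakamaye entirely and is closer in spirit to the cited lemma) has the merit of being self-contained and of showing exactly where finiteness over $U$ enters, at the cost of invoking heavier base-locus machinery than the one-line reference used in the paper.
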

\begin{proof}
	Since $f_X$ and $f_Y$ are morphism, the existence of the limit defining the arithmetic degrees are known.
	The equality $\alpha_{f_X}(x) = \alpha_{f_Y}(y)$ is a part of \cite[Lemma 2.8]{MS20}.
\end{proof}

\begin{lem}\label{lem: genefin reduction}%finite
	Let $X,Y$ be projective varieties,
	 $f_X\colon X\longrightarrow X$ and $f_Y\colon Y \longrightarrow$ 
	surjective self-morphisms on $X$ and $Y$, respectively,
	and $\pi \colon X \longrightarrow Y$ a surjective morphism such that $\pi \circ f_X = f_Y \circ \pi$.
	\begin{parts}
		\Part{(a)} If $\pi$ is birational and $(Y, f_Y)$ satisfies $(DR)$,
			then $(X, f_X)$  satisfies $(DR)$.

		\Part{(b)} If $\pi$ is finite and $(X,f_X)$  satisfies the  $(DR)$,
			then $(Y, f_Y)$ satisfies  $(DR)$.
	\end{parts}
\end{lem}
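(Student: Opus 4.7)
The plan is to transfer a witness set for $(DR)$ along $\pi$ and verify the three defining conditions, using Lemma \ref{lem: invariance} to match arithmetic degrees and the invariance remark following it to match dynamical degrees.

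For part (a), I would fix a finite extension $L/K$ and a witness $S_Y \subset Y(L)$ for $(Y, f_Y)$. Since $\pi$ is birational, there is a dense open $U \subset Y$ on which $\pi|_V \colon V := \pi^{-1}(U) \to U$ is an isomorphism. The key step is to produce a Zariski dense $S_Y^\circ \subset Y(L)$ each of whose points has its entire forward $f_Y$-orbit contained in $U$, while inheriting the disjoint-orbit and maximal arithmetic-degree properties from $S_Y$. For this I would either intersect $S_Y$ with $\bigcap_{n\geq 0} f_Y^{-n}(U)$, or shift each $y \in S_Y$ along its orbit to land in $U$ permanently; shifting preserves arithmetic degree and keeps distinct points inside distinct orbits, hence preserves disjointness. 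Density survives because $Y \setminus U$ has codimension $\geq 1$ in $Y$ and the disjoint-orbit structure of $S_Y$ spreads its points thinly across the preimages $f_Y^{-n}(Y \setminus U)$. Once $S_Y^\circ$ is in hand, set $S_X := (\pi|_V)^{-1}(S_Y^\circ) \subset X(L)$: this is Zariski dense (as $\pi|_V$ is an isomorphism onto a dense open), each $x \in S_X$ satisfies $\alpha_{f_X}(x) = \alpha_{f_Y}(\pi(x)) = \delta_{f_Y} = \delta_{f_X}$ by Lemma \ref{lem: invariance}, and disjoint $f_X$-orbits in $S_X$ descend from disjoint $f_Y$-orbits in $S_Y^\circ$ via $\pi$.

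For part (b), given a witness $S_X \subset X(L)$, I would set $S_Y := \pi(S_X) \subset Y(L)$. This is Zariski dense: if $\pi(S_X) \subset C$ for some proper closed $C \subsetneq Y$, then $S_X \subset \pi^{-1}(C) \subsetneq X$, contradicting density of $S_X$. Since $\pi$ is finite, Lemma \ref{lem: invariance} applies with $U = Y$ and $V = X$, giving $\alpha_{f_Y}(\pi(x)) = \alpha_{f_X}(x) = \delta_{f_X} = \delta_{f_Y}$ for every $x \in S_X$. The delicate condition is disjointness in $Y$: $x, x' \in S_X$ with disjoint $f_X$-orbits may still satisfy $\pi(f_X^m x) = \pi(f_X^n x')$ for some $m, n$, in which case $f_X^m(x)$ and $f_X^n(x')$ lie in a common $\pi$-fiber of cardinality at most $\deg \pi$. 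To handle this, I would introduce the equivalence relation on $S_X$ defined by $x \sim x'$ iff $O_{f_Y}(\pi x) \cap O_{f_Y}(\pi x') \neq \emptyset$, and extract a transversal $T \subset S_X$ with $\pi(T)$ still Zariski dense in $Y$. Each equivalence class is contained in the $\pi$-preimage of a single $f_Y$-orbit-equivalence class in $Y$, hence in a countable union of finite $\pi$-fibers, which is not Zariski dense in $X$; the abundance of $S_X$ inside any non-empty open then permits a choice of transversal preserving density.

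The main obstacle in both parts is precisely this density-preservation step after restricting to the good set: in (a), producing orbits contained in $U$; in (b), extracting a transversal of the orbit equivalence relation. In each case the disjoint-orbit property of the starting witness set, combined with the codimension bound on $Y \setminus U$ in (a) or the finiteness of $\pi$-fibers in (b), is what prevents density loss.
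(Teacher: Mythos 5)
Both halves of your plan contain a step that does not hold up. In part (a), everything rests on producing a Zariski dense subset of the witness set whose points have their \emph{entire} forward $f_Y$-orbit inside $U$, and neither mechanism you offer delivers this: an orbit can meet $Y\setminus U$ infinitely often, so no shift $y\mapsto f_Y^N(y)$ need land it in $U$ permanently; and $S_Y\cap\bigcap_{n\ge 0}f_Y^{-n}(U)$ can a priori be empty or non-dense, because a countable union of proper closed subsets can perfectly well contain a Zariski dense set of $L$-rational points (e.g.\ $Y(L)$ itself is such a union of points), so ``the disjoint-orbit structure spreads the points thinly'' is not an argument. The detour is also unnecessary: since $\pi$ is a surjective morphism, functoriality of heights gives $\alpha_{f_X}(x)\ge \alpha_{f_Y}(\pi(x))$ unconditionally, and combining this with $\alpha_{f_X}(x)\le\delta_{f_X}=\delta_{f_Y}$ (Kawaguchi--Silverman plus birational invariance of the dynamical degree) already yields $\alpha_{f_X}(x)=\delta_{f_X}$ whenever $\alpha_{f_Y}(\pi(x))=\delta_{f_Y}$, with no condition on the orbit; this is in effect how the paper uses Lemma \ref{lem: invariance} (it simply pulls back suitably chosen witness points through the isomorphism over $U$, selecting them inductively against an enumeration of the proper closed subsets of $X$).

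In part (b), the decisive claim --- that one can extract a transversal of the relation $x\sim x'\iff O_{f_Y}(\pi x)\cap O_{f_Y}(\pi x')\ne\emptyset$ with dense image --- is justified by saying each class lies in a countable union of finite $\pi$-fibers, ``which is not Zariski dense in $X$.'' That implication is false: countable sets of points can be Zariski dense (again, $S_X$ itself is one), so non-density of a class does not follow from countability, and without some control on the classes the transversal could lose density (a single large class could contain all of $S_X$ outside $\pi^{-1}(C)$ for some proper closed $C$). The ingredient you never use is the pairwise disjointness of the $f_X$-orbits in $S_X$ together with the bound $\#\pi^{-1}(y)\le\deg\pi$: if the orbits of $x_{k_1},\ldots,x_{k_r}$ all meet $O_{f_Y}(\pi x_i)$ downstairs, then for all large $n$ the finite fiber $\pi^{-1}(f_Y^n(\pi x_i))$ contains one point from each of these pairwise disjoint orbits (plus $f_X^n(x_i)$), forcing $r<\deg\pi$; hence each equivalence class is \emph{finite}. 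This is exactly the paper's observation ``since $\pi$ is finite, the number of $k$ with $O_{f_Y}(\pi(x_k))\cap O_{f_Y}(\pi(x_i))\ne\emptyset$ is finite,'' and it is what makes the inductive choice of representatives avoiding each proper closed subset of $Y$ go through. With that finiteness supplied, your transversal scheme closes; as written, it does not.
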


\begin{proof}
	\begin{parts}
		\Part{(a)} Let $L$ be a finite extension of $K$ such that $(Y, f_Y)$ satisfies $(DR)_L$.
			Let $S_Y = \{ y_i\}_{i=1}^{\infty} \subset Y(L)$ be a sequence of $L$-rational points such that
			\begin{itemize}
				\item $S_Y$ is Zariski dense,
				\item $\alpha_{f_Y}( y_i) = \delta_{f_Y}$ for $i \geq 1$, and
				\item $O_{f_Y} (y_i) \cap O_{f_Y}(y_j) = \emptyset$ for $i\neq j$.
			\end{itemize}
			Let $\{X_j \}_{j=1}^{\infty}$ be the family of all the proper closed subsets of $X$.
			Let $U\subset X$ and $V\subset Y$ be open subsets such that $\pi|_U \colon U\longrightarrow V$ is isomorphic.
			Then we can inductively take a subsequence $\{y_{i_j}\}_{j=1}^{\infty} \subset V$ of $S_Y$ such that
			$x_j := (\pi|_{U})^{-1}(y_{i_j}) \not \in X_j$ for $j\geq 1$.
			Since $\alpha_{f_X} (x_j) =\alpha_{f_Y}(y_{i_j})$ and $\delta_{f_X} = \delta_{f_Y}$ by Lemma \ref{lem: invariance},
			the sequence $S_X := \{x_j\}_{j=1}^{\infty}$ satisfies
			\begin{itemize}
				\item $x_j \not \in X_j$ for $j\geq 1$,
				\item $\alpha_{f_X}(x_j) = \delta_{f_X}$ for $j \geq 1$, and
				\item $O_{f_X}(x_j) \cap O_{f_X}(x_k) = \emptyset$ for $j\neq k$.
			\end{itemize}
			
		\Part{(b)} Let $L$ be a finite extension of $K$ such that $(X, f_X)$ satisfies $(DR)_L$.
			Let $S_X = \{ x_i\}_{i=1}^{\infty} \subset X(L)$ be a sequence of $L$-rational points such that
			\begin{itemize}
				\item $S_X$ is Zariski dense,
				\item $\alpha_{f_X}( x_i) = \delta_{f_X}$ for $i \geq 1$, and
				\item $O_{f_X} (x_i) \cap O_{f_X}(x_j) = \emptyset$ for $i\neq j$.
			\end{itemize}
			Let $\{Y_j \}_{j=1}^{\infty}$ be the family  of all the proper closed subsets of $Y$.
			Then since $\pi$ is finite, the number of $k \in \Z_{\ge 1}$ such that $O_{f_Y}(\pi(x_k)) \cap O_{f_Y}(\pi(x_i)) \neq \emptyset$ is finite for each $i \ge 1$.
			So we can inductively take a subset $\{x_{i_j}\}_{j=1}^{\infty}$
			such that
			\begin{itemize}
				\item $y_j := \pi(x_{i_j}) \not \in Y_j$ for $j\geq 1$,
				\item $\alpha_{f_Y}(y_j) = \alpha_{f_X} (x_{i_j}) = \delta_{f_X} = \delta_{f_Y}$, and
				\item $O_{f_Y}(y_j) \cap O_{f_Y}(y_k) = \emptyset$ for $j\neq k$,
			\end{itemize}
			where the second assertion follows by Lemma \ref{lem: invariance}.
	\end{parts}
\end{proof}

\begin{lem} \label{lem: iterate reduction}
	Let $X$ be a projective variety and 
	$f\colon X\longrightarrow X$ a surjective self-morphism.
	Then for any integer $t \geq 1$ and a finite extension $L/K$,
	$(X,f)$ satisfies $(DR)_L$
	if and only if $(X, f^t)$ satisfies $(DR)_L$.
\end{lem}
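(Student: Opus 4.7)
The plan is to handle the two directions separately. The forward implication, that $(X,f)$ satisfying $(DR)_L$ forces $(X,f^t)$ to satisfy $(DR)_L$, is essentially immediate: I would show that the same witness $S\subset X(L)$ works for both, using the identities $\alpha_{f^t}(x)=\alpha_f(x)^t$ and $\delta_{f^t}=\delta_f^t$ together with the obvious inclusion $O_{f^t}(x)\subseteq O_f(x)$, which together transfer Zariski density, maximality of arithmetic degree, and pairwise orbit disjointness from $f$ to $f^t$.

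For the reverse implication I would start with $S=\{x_i\}_{i\ge 1}\subset X(L)$ realizing $(DR)_L$ for $(X,f^t)$ and extract a subsequence $S'\subseteq S$ realizing $(DR)_L$ for $(X,f)$. Those same identities give $\alpha_f(x)=\alpha_{f^t}(x)^{1/t}=\delta_f$ for every $x\in S$, so condition (2) is automatic for any subset of $S$; the real task is to choose $S'$ that is Zariski dense and has pairwise disjoint $f$-orbits, even though the $f$-orbits of distinct elements of $S$ may collide.

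The crux will be a uniform bound $|C(x)|\le t$ on $C(x):=\{y\in S : O_f(y)\cap O_f(x)\ne\emptyset\}$. If $x$ is not preperiodic under $f$, then for each $y\in C(x)$ the integer $r(y):=a-b$ with $f^a(y)=f^b(x)$ is independent of the choice of $(a,b)$, and for $y_1,y_2\in C(x)$ applying $f^{b_2}$ and $f^{b_1}$ to $f^{a_i}(y_i)=f^{b_i}(x)$ yields $f^{a_1+b_2}(y_1)=f^{a_2+b_1}(y_2)$; the assumed pairwise disjointness of $f^t$-orbits in $S$ then forces $r(y_1)\not\equiv r(y_2)\pmod t$, making $r\bmod t$ an injection $C(x)\hookrightarrow\mathbb{Z}/t\mathbb{Z}$. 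If $x$ is preperiodic with eventual period $p$, every $y\in C(x)$ is preperiodic with $O_f(y)$ eventually lying in the $f$-cycle of $x$, and its $f^t$-orbit eventually contains an entire $f^t$-cycle inside this $f$-cycle; two elements of $C(x)$ whose $f^t$-orbits land in the same such $f^t$-cycle would share that entire cycle, contradicting $f^t$-orbit disjointness, so $|C(x)|\le\gcd(p,t)\le t$.

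Once the bound is established, I finish by a diagonal enumeration in the style of Lemma~\ref{lem: genefin reduction}: enumerate the proper closed subsets of $X$ as $\{X_j\}_{j\ge 1}$ and inductively choose $x_{i_j}\in S$ with $x_{i_j}\notin X_j$ and $O_f(x_{i_j})\cap O_f(x_{i_k})=\emptyset$ for $k<j$. At step $j$ the excluded part of $S$ consists of $S\cap X_j$ together with at most $t(j-1)$ further points coming from the bound on $C(x_{i_k})$, and since $S\setminus X_j$ is infinite by Zariski density of $S$, a valid choice exists. The main obstacle will be establishing the uniform bound $|C(x)|\le t$, especially in the preperiodic case, which requires careful bookkeeping about how $f^t$-orbits sit inside $f$-cycles.
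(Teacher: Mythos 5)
Your proposal is correct and follows essentially the same route as the paper: the forward direction is immediate from $O_{f^t}(x)\subseteq O_f(x)$ and $\alpha_{f^t}(x)=\alpha_f(x)^t$, $\delta_{f^t}=\delta_f^t$, and the reverse direction rests on the bound that at most $t$ points of $S$ can have $f$-orbit meeting that of a given point, followed by an inductive selection avoiding an enumeration of proper closed subsets. The only difference is that you prove the ``at most $t$'' collision bound in detail (splitting into the non-preperiodic and preperiodic cases), whereas the paper simply asserts it; your verification of that bound is correct.
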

\begin{proof}
	Obviously $(X,f^t)$ satisfies $(DR)_L$ if so does $(X,f)$.
	Conversely, assume that $(X,f^t)$ satisfies $(DR)_L$.
	Let $S = \{ x_i \}_{i=1}^{\infty} \subset X(L)$ be a subset such that
	\begin{itemize}
		\item $S$ is Zariski dense,
		\item $\alpha_{f^t}(x_i) = \delta_{f^t}$ for $i = 1,2, \ldots$,
		\item $O_{f^t}(x_i) \cap O_{f^t}(x_j) = \emptyset$ for $i\neq j$.
	\end{itemize}
	Note that we have $\alpha_f(x) =\alpha_{f^t}(x)^{1/t} = \delta_{f^t}^{1/t} =  \delta_f$ for $x\in S$.
	Thus, it is enough to prove that
	for any proper closed subset $Y\subset X$ and any points $x_1',x_2', \ldots x_r' \in S$,
	there is a point $x \in S$ such that $x \not \in Y$ and $O_f(x) \cap O_f(x_i') = \emptyset$ for $1 \le i \le r$.
	Since $S$ is Zariski dense, there are infinitely many $i \geq 1$ such that $x_i \not \in Y$.
	Since the number of $i \ge 1$ such that $O_f(x_i) \cap O_f(x_j') \neq \emptyset$ is at most $t$ for each $j$.
	Hence we can get a point $x_i \in S$ which we wanted.
\end{proof}

%%%%%%%%%%%%%%%%%%%%%%%%%%%%
%非自己同型に関する補題の節
%%%%%%%%%%%%%%%%%%%%%%%%%%%%
%\subsection{On non-isomorphic surjective self-morphisms}

\begin{thm}\label{thm: uniform bound}
Let $K$ be a number field.
Then there is a constant $N$ such that, for any elliptic curve defined over $K$,
the number of $K$-torsion points is at most $N$.
\end{thm}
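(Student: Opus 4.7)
The statement to be proved is the uniform boundedness of $K$-rational torsion on elliptic curves over a fixed number field. The plan is to reduce it to Merel's celebrated uniform boundedness theorem, whose proof is far too deep to reproduce here.

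First I would recall Merel's theorem: for every integer $d \geq 1$ there is a constant $B(d)$ such that, for every number field $F$ with $[F:\Q] = d$ and every elliptic curve $E/F$, every torsion point in $E(F)$ has order at most $B(d)$. This result strengthens the earlier theorems of Mazur (for $F = \Q$) and Kamienny (for quadratic fields). Applying it with $d = [K:\Q]$ yields a uniform bound $B := B([K:\Q])$ on the orders of $K$-rational torsion points of any elliptic curve over $K$, independent of the particular curve.

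To convert this order bound into the desired cardinality bound I would use the standard structure of torsion on an elliptic curve: for each integer $n \geq 1$ the $n$-torsion satisfies $E[n] \cong (\Z/n\Z)^2$ over $\overline{K}$. Since every element of $E(K)_{\mathrm{tor}}$ has order dividing $B$, we obtain $E(K)_{\mathrm{tor}} \subset E[B]$, hence $|E(K)_{\mathrm{tor}}| \leq B^2$; taking $N := B^2$ gives the required constant.

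The main obstacle in this ``proof'' is not really in the deduction, which is formal, but in Merel's theorem itself, whose proof relies on a delicate analysis of the geometry of modular curves (the winding quotient of $J_0(p)$, among other ingredients). Since that work is standard and well documented, the only genuine task here is to identify the statement as a special case of the uniform boundedness theorem and to perform the one-line passage from bounded exponent to bounded cardinality.
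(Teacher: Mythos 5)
Your proof takes essentially the same route as the paper, whose entire proof is the citation ``See \cite{Mer96}'': both simply invoke Merel's uniform boundedness theorem, and your extra passage from bounded order to bounded cardinality is a harmless formality (the standard formulations of Merel plus Oesterl\'e already bound $\#E(K)_{\tors}$ itself). One small wording slip: an order bound $\leq B$ does not mean the order \emph{divides} $B$, so either argue via the exponent $e \leq B$ of $E(K)_{\tors}$ and $E(K)_{\tors} \subset E[e] \cong (\Z/e\Z)^2$, or take $N = (B!)^2$.
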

\begin{proof}
See \cite{Mer96}.
\end{proof}

\begin{lem}\label{lem: deinsity of ellipric surface}
	Let $\pi \colon X \longrightarrow B$ be an elliptic surface.
	Then the followings are equivalent.
	\begin{itemize}
		\item[(a)] $X(K)$ is Zariski dense.
		\item[(b)] There are infinitely many points $b\in B(K)$ such that $X_b = \pi^{-1}(b)$ has infinitely many $K$-rational points.
	\end{itemize}
\end{lem}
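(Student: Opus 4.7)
The easy implication (b) $\Rightarrow$ (a) I would handle by contraposition. Suppose $X(K)$ were contained in a proper closed subset $Z \subsetneq X$; each irreducible component of $Z$ has dimension at most one and is either horizontal (dominant over $B$, hence meeting each fiber finitely) or vertical (contracted to a single $b \in B$). Consequently $Z \cap X_b$ is finite for all but finitely many $b \in B$, so among the infinitely many $b \in B(K)$ with $|X_b(K)| = \infty$ supplied by (b) at least one satisfies $|Z \cap X_b| < \infty$; then $X_b(K) \subset Z \cap X_b$ would be simultaneously infinite and finite, a contradiction.

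For (a) $\Rightarrow$ (b) I would argue by contradiction. Assume only finitely many $b \in B(K)$ yield $|X_b(K)| = \infty$, and collect these exceptional points together with the finitely many $b$ at which $X_b$ is singular into a finite set $T \subset B(K)$. For $b \in B(K) \setminus T$ the fiber $X_b$ is a smooth genus-one curve with $|X_b(K)| < \infty$; whenever $X_b(K) \neq \emptyset$, choosing a $K$-point as origin makes $X_b$ into an elliptic curve of Mordell--Weil rank zero over $K$, so $X_b(K) = X_b(K)_{\tors}$. Theorem \ref{thm: uniform bound} then provides a universal integer $M$, depending only on $K$, such that every such point has order dividing $M$, i.e.\ $X_b(K) \subset X_b[M]$ relative to the chosen origin.

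The heart of the argument is assembling these pointwise torsion bounds into a single $1$-dimensional closed subset $Y \subset X$ containing $X_b(K)$ for every $b \in B(K) \setminus T$. When $\pi$ admits a section---the case to which one reduces in practice---I would take $Y$ to be the Zariski closure of the relative $M$-torsion subscheme $X[M] \to B^\circ$ over the smooth locus $B^\circ \subset B$; since this map is finite \'etale of degree $M^2$, the closure $Y$ has dimension one. Adjoining the fibers $X_b$ for $b \in T$ (a finite union of curves) gives a $1$-dimensional closed subset of $X$ containing all of $X(K)$, contradicting Zariski density. The main obstacle I anticipate is the case where $\pi$ has no global section: then each $X_b$ is only a torsor under the Jacobian fiber $J_b$, so there is no intrinsic torsion subscheme of $X$. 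To handle it I would pass to the Jacobian fibration $J \to B$, use that any two $K$-points of $X_b$ differ by an element of $J_b[M]$, and glue the resulting translates of $J[M]$ into a globally defined $1$-dimensional locus using a $K$-rational multisection of $\pi$ (which always exists, e.g.\ by taking the closure of any $K$-point).
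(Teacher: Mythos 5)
Your argument for (a) $\Rightarrow$ (b) is, in the case the paper actually treats, the same as the paper's: discard finitely many bad fibres, invoke Merel's theorem (Theorem \ref{thm: uniform bound}) to see that over the remaining $b\in B(K)$ every $K$-point of $X_b$ is torsion of uniformly bounded order, and trap $X(K)$ inside the one-dimensional set consisting of finitely many fibres together with the relative $M$-torsion locus; the paper writes this as $X(K)\subset \ker([N!])\cup(\pi^{-1}(B\setminus B'))(K)$ after shrinking to an open $B'\subset B$ over which $\pi$ is an abelian scheme, and it dismisses your carefully argued (b) $\Rightarrow$ (a) as ``clear''. So on the common ground the proposal is correct and essentially identical in spirit.

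The place where you go beyond the paper --- the sectionless case, which the paper silently buries in the phrase ``admits the structure of an abelian scheme'' --- is also where your sketch does not yet work. First, a multisection is not ``the closure of any $K$-point'': a $K$-point of $X$ is already a closed point, so its closure is zero-dimensional; take instead a hyperplane section of $X$, or the closure of a closed point of the generic fibre. More seriously, the translate of $J_b[M]$ containing $X_b(K)$ is anchored at a $K$-point of $X_b$, and a multisection $D$ gives no control over that anchor: a point of $D\cap X_b$ need not differ from a $K$-point by a torsion element, so ``gluing the translates of $J[M]$ along $D$'' does not visibly yield a closed subset containing $X(K)$. The standard repair is to let $d$ be the degree of $D$ over $B$ and use $D$ to define, over a dense open subset of $B$, a fibrewise finite $B$-morphism to the Jacobian fibration sending $x\in X_b$ to the class of $dx-D_b$ in $J_b$; for a good $b\in B(K)$ with $X_b(K)\neq\emptyset$ the group $J_b(K)$ is in bijection with the finite set $X_b(K)$, hence is torsion of order at most $N$, so this class lies in $J_b[N!]$, and therefore $X(K)$, away from finitely many fibres, lands in the preimage of $J[N!]$, which is fibrewise finite and hence one-dimensional. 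With that replacement your treatment of the torsor case becomes a genuine completion of the paper's terser argument.
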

\begin{proof}
Clearly (b) implies (a).
Assume that (b) does not hold.
Then we can take an open subset $B' \subset B$ such that 
$X' = \pi^{-1}(B') \overset{\pi}{\longrightarrow} B'$ admits the structure of 
an abelian scheme and $X_b(K)$ is finite for any $b \in B'(K)$.
Let $N$ be an upper bound of the number of torsion points of elliptic curves defined over $K$, which we can take by Lemma \ref{thm: uniform bound}.
Let $[N!] \colon X' \longrightarrow X'$ be the morphism defined by the multiplication 
map by $N!$ on each fiber.
Then we have
\[
X(K) \subset \ker ([N!])\ \cup\ (\pi^{-1}(B \setminus B'))(K).
\]
Hence $X(K)$ is not dense.
\end{proof}

\begin{lem}\label{lem: polarized}
Let $f\colon X\longrightarrow X$ be a surjective self-morphism with $\delta_f >1$
on a normal projective variety.
Assume that there is an ample $\R$-Cartier divisor $H$ such that $f^\ast H \equiv_{\R} \delta_f H$.
\begin{parts}
	\Part{(a)} The number of the preperiodic points are finite.
	\Part{(b)} $x\in X(\overline{K})$ is not preperiodic if and only if $\alpha_f(X) = \delta_f$.
\end{parts}
\end{lem}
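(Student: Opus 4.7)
The plan is to build the Call--Silverman canonical height attached to $f$ and $H$, and deduce both parts from its standard properties. From the hypothesis $f^\ast H \equiv_{\R} \delta_f H$ and the functoriality of Weil heights, one has
\[
h_H \circ f = \delta_f \, h_H + O(1)
\]
on $X(\overline K)$. A telescoping argument then shows that the limit
\[
\hat h_f(x) := \lim_{n \to \infty} \frac{h_H(f^n(x))}{\delta_f^n}
\]
exists for all $x \in X(\overline K)$ and defines a function satisfying $\hat h_f \circ f = \delta_f \cdot \hat h_f$ and $\hat h_f = h_H + O(1)$. Since $H$ is ample, one may arrange $h_H \geq -C$ on $X(\overline K)$ for some constant $C$, and passing to the limit in $h_H(f^n(x))/\delta_f^n \geq -C/\delta_f^n$ forces $\hat h_f \geq 0$.

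Next I would establish the characterization: $x \in X(\overline K)$ is preperiodic if and only if $\hat h_f(x) = 0$. The ``only if'' direction is immediate, since the orbit of a preperiodic point is finite and so $h_H$ is bounded on it. For ``if'', suppose $\hat h_f(x) = 0$; then $\hat h_f(f^n(x)) = \delta_f^n \hat h_f(x) = 0$ for every $n$, so $h_H(f^n(x))$ is uniformly bounded. Since every $f^n(x)$ lies in $X(L)$ for the finite extension $L = K(x)$, Northcott's theorem forces the orbit to be finite, i.e. $x$ is preperiodic.

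With this in hand, part (a) follows by applying Northcott to the set of $x \in X(L)$ satisfying $h_H(x) \leq C'$, where the bound $C'$ comes from $\hat h_f = h_H + O(1)$ together with $\hat h_f(x) = 0$ for preperiodic $x$; this yields finitely many preperiodic points in $X(L)$ for each finite extension $L/K$. For part (b), if $x$ is not preperiodic then $\hat h_f(x) > 0$, so
\[
h_H(f^n(x)) = \delta_f^n \, \hat h_f(x) + O(1),
\]
and extracting $n$-th roots gives $\alpha_f(x) = \delta_f$; conversely, if $x$ is preperiodic then its orbit is finite, so $h_H$ is bounded on it and $\alpha_f(x) \leq 1 < \delta_f$, forcing the equality $\alpha_f(x) = \delta_f$ to fail.

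The only step that is not purely formal is the construction of $\hat h_f$ on a normal (rather than smooth) projective variety, since one must ensure that the functorial relation $h_{f^\ast H} = \delta_f h_H + O(1)$ and the lower bound $h_H \geq -C$ remain available. Both hold in full generality for Weil heights attached to Cartier divisors on projective varieties, so this obstacle is essentially cosmetic and the argument reduces to the classical Call--Silverman construction.
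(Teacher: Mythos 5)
There is a genuine gap at the very first step. From $f^\ast H \equiv_{\R} \delta_f H$ (numerical equivalence) you cannot conclude $h_H\circ f = \delta_f h_H + O(1)$: functoriality of Weil heights gives $h_H\circ f = h_{f^\ast H}+O(1)$, but heights attached to \emph{numerically} equivalent divisors agree only up to $O(\sqrt{h_H})$, not $O(1)$. (Already on an elliptic curve, a degree-zero divisor $(P)-(O)$ with $P$ non-torsion is numerically trivial while its height is unbounded, comparable to $\sqrt{h}$.) Your whole construction of $\hat h_f$, and in particular the crucial comparison $\hat h_f = h_H + O(1)$, therefore does not follow from the stated hypothesis; the relation you use is only available when $f^\ast H \sim_{\R} \delta_f H$. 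The paper closes exactly this gap by first citing a result of Matsuzawa--Meng--Shibata--Zhang--Zhong (Theorem 6.4(1) of \cite{MMSZZ20}) producing an ample $\R$-Cartier divisor $D'$ with $f^\ast D' \sim_{\R} \delta_f D'$, and only then applying the Call--Silverman canonical height machinery (\cite[Corollary 1.1.1]{CS93}), which is the argument you sketch.

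Your route can be repaired in two ways: either invoke such an upgrade from $\equiv_\R$ to $\sim_\R$, or redo the telescoping with the weaker estimate $h_H\circ f = \delta_f h_H + O(\sqrt{h_H})$. The latter does work because $\delta_f>1$, but it needs an extra step you do not supply: one first proves a growth bound of the shape $\sqrt{h_H(f^n(x))} \le \delta_f^{n/2}\bigl(\sqrt{h_H(x)}+O(1)\bigr)$ so that the telescoping series converges, and one only obtains $\hat h_f = h_H + O(\sqrt{h_H})$, which is still enough for both (a) and (b). Two smaller remarks: part (a) must be read (as in Call--Silverman) as finiteness of preperiodic points of bounded degree, e.g.\ in $X(L)$ for a fixed finite extension $L$ --- over $\overline K$ preperiodic points are typically Zariski dense --- and your conclusion in (a) is indeed of this correct form; and your worry about normal versus smooth $X$ is, as you say, immaterial, since the height machine works for $\R$-Cartier divisors on any projective variety.
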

\begin{proof}
By \cite[Theorem 6.4 (1)]{MMSZZ20}, there is an ample $\R$-Cartier divisor $D'$ such that $f^\ast D' \sim_{\R} \delta_f D'$.
So the assertion follows from \cite[Corollary 1.1.1]{CS93}.
\end{proof}

%%%%%%%%%%%%%%%%%%%%%%%%%%%%
%自己同型に関する節
%%%%%%%%%%%%%%%%%%%%%%%%%%%%
\section{Ptoof of Theorem \ref{thm: main}}
We divide the proof of Theorem \ref{thm: main} into two cases: the automorphism case and the non-isomorphic self-morphism case.

\subsection{The automorphism case} \label{subsec: lem for autom}
We start with listing known results on automorphisms of surfaces.
\begin{lem}\label{lem: eigen}
	Let $X$ be a smooth projective surface over $\C$, and $f\colon X \longrightarrow X$ be an automorphism with $\delta_f > 1$.
	\begin{parts}
		\Part{(a)} The set of eigenvalues of $f^\ast |_{H^{2}(X,\R)}$ with counted multiplicities is
			\[
			\{ \delta_f, \delta_f^{-1}, \lambda_1, \lambda_2, \ldots \lambda_{\dim H^2(X, \R) -2}\},
			\]
			where $|\lambda_i| = 1$ for all $i= 1,2,\ldots , \dim H^2(X,\R)-2$.

		\Part{(b)} The eigenvalues $\delta_f$ and $\delta_f^{-1}$ are irrational real numbers.
			Moreover, $\delta_f^{-1}$ is a Galois conjugate of  $\delta_f$ over $\Q$.

		\Part{(c)} There are numerically non-zero nef $\R$-divisors $D^+$ and $D^-$
			satisfying $f^\ast D^+ \sim_{\R} \delta_f D^+$ and $f^\ast D^- \sim_{\R} \delta_f^{-1} D^-$, respectively.

		\Part{(d)} For a curve $C$ in $X$, $(C\cdot D^+) = 0$ holds if and only if $(C\cdot D^{-}) = 0$.
		
		\Part{(e)} Let $D := D^+ + D^-$. Then the set $\mathcal{C}_f$ of irreducible curves $C$
				satisfying $(C\cdot D) = 0$ is a finite set.
		
		\Part{(f)} The set $\mathcal{C}_f$ coincides with the set of $f$-periodic irreducible curves in $X$.
		
		\Part{(g)} For $\bullet \in \{ +, - \}$, the set $\mathcal{C}_f$ coincides
				with the set of irreducible curves $C$ such that $(C\cdot D^{\bullet}) = 0$ holds.
	\end{parts}
\end{lem}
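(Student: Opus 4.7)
These are structural results for positive-entropy surface automorphisms that are standard in the literature; my plan is to identify the Hodge-theoretic inputs and then derive the later parts from them. For parts (a)--(c), I would cite Cantat's work on surface automorphisms (and McMullen for the K$3$ prototype): (a) comes from the fact that $f^{\ast}$ is a Hodge isometry of $H^{2}(X,\R)$, so on the $(1,1)$-part, which has signature $(1, h^{1,1}-1)$, the leading eigenvalue $\delta_{f}$ is paired with $\delta_{f}^{-1}$ by the intersection form, and the orthogonal complement is negative definite with isometry action, forcing the remaining eigenvalues to lie on the unit circle. (b) is the standard Salem/Pisot-number argument: $f^{\ast}$ acts on the lattice $\NS(X)$ preserving a quadratic form, so its characteristic polynomial is reciprocal, and combined with the fact that $\delta_{f}$ is the unique root outside the closed unit disc this forces $\delta_{f}$ to be irrational with $\delta_{f}^{-1}$ a Galois conjugate. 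For (c), a Perron--Frobenius argument applied to the action of $f^{\ast}$ (resp.\ $f_{\ast}$) on the nef cone produces the required eigenvectors.

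The pivotal observation is part (d). The nef eigenvectors $D^{+}$ and $D^{-}$ are Galois conjugate over $\Q$: writing their coordinates in a rational basis of $\NS(X)_{\Q}$ yields vectors with entries in $\Q(\delta_{f})$ that are exchanged by the Galois automorphism $\delta_{f}\mapsto \delta_{f}^{-1}$. Since an irreducible curve has a rational numerical class, $(C\cdot D^{+})$ lies in $\Q(\delta_{f})$ with Galois conjugate $(C\cdot D^{-})$, and the two vanish together.

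Part (e) follows from the fact that $D = D^{+} + D^{-}$ is big and nef, since $(D^{+})^{2} = (D^{-})^{2} = 0$ while $D^{+}\cdot D^{-} > 0$ on the $f^{\ast}$-invariant hyperbolic plane. Irreducible curves with $C\cdot D = 0$ lie in the negative-definite orthogonal complement in $\NS(X)$ and satisfy $C^{2}<0$; the adjunction bound then yields finiteness. For (f), the projection formula applied to $f^{\ast}D^{\pm} = \delta_{f}^{\pm 1}D^{\pm}$ together with $f^{N}(C)=C$ gives $(C\cdot D^{\pm}) = \delta_{f}^{\pm N}(C\cdot D^{\pm})$, which forces $(C\cdot D^{\pm}) = 0$; conversely, for $C\in \mathcal{C}_{f}$ one computes $(f(C)\cdot D^{\pm}) = \delta_{f}^{\pm 1}(C\cdot D^{\pm})=0$, so $\mathcal{C}_{f}$ is $f$-stable and finite by (e), whence every element is periodic. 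Finally (g) is immediate: nefness of $D^{\pm}$ gives $(C\cdot D^{\pm}) \geq 0$, so $(C\cdot D) = 0$ is equivalent to the simultaneous vanishing $(C\cdot D^{+}) = (C\cdot D^{-}) = 0$, and by (d) either one alone suffices.

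The only genuine obstacle is the Hodge-theoretic/Salem-number input behind (a)--(c); I would not reprove this, and everything else is straightforward book-keeping once (b) and (c) are in hand.
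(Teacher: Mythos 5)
Your proposal is correct in substance and, for the parts the paper actually argues, follows essentially the same route. The paper also treats (a) and (c) as citations (McMullen, Kawaguchi); it proves (b) by noting that $f^\ast$ acts on $H^2(X,\Z)$ by an integer matrix whose characteristic polynomial has constant term $\pm 1$, so the minimal polynomial of $\delta_f$ has constant term $\pm 1$ and, since all remaining eigenvalues lie on the unit circle by (a), must have $\delta_f^{-1}$ among its roots --- this is your reciprocity/Salem argument in slightly different clothing; it proves (d) by exactly your mechanism, viewing $D^+$ and $D^-$ as classes in $\N^1(X)\otimes_{\Z}\Q(\delta_f)$ exchanged (up to scalar) by the Galois automorphism sending $\delta_f$ to $\delta_f^{-1}$; and (g) is the same nefness observation. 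The one genuine divergence is (e)--(f): the paper simply cites Kawaguchi's Proposition 3.1, whereas you sketch a self-contained proof. Your (f) is fine (projection formula for periodic curves, and $f$-stability of the finite set $\mathcal{C}_f$ under the automorphism forces periodicity).

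In (e), though, ``the adjunction bound then yields finiteness'' is under-argued as stated: adjunction gives $C^2 \ge -2 - (K_X\cdot C)$, so you need an upper bound on $(K_X\cdot C)$ before you can bound $C^2$ from below. This can be supplied by observing that $f^\ast K_X \equiv K_X$, so pairing with the eigenvalue equations gives $(K_X\cdot D^{\pm})=0$; hence $K_X$ also lies in the negative definite orthogonal complement $D^{\perp}$, and Cauchy--Schwarz there bounds $|K_X\cdot C|$ by $\sqrt{|K_X^2|\,|C^2|}$, which combined with adjunction bounds $|C^2|$ uniformly. Together with the fact that distinct irreducible curves of negative self-intersection have distinct numerical classes, and that classes of bounded norm in a negative definite lattice are finite in number, this completes (e); that extra step (or simply the citation the paper uses) is needed, but it is a fillable gap rather than a flaw in the approach.
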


\begin{proof}
	\begin{parts}
		\Part{(a)} See \cite[Theorem 3.2]{McM02} and \cite[Theorem 2.1]{Kaw08}.
		
		\Part{(b)} Since $f^\ast |_{H^2(X, \R)}$ is induced by the action of $f^\ast$ on $H^2(X,\Z)$,
				an integer matrix represents $f^\ast |_{H^2(X, \R)}$.
				So $\delta_f$ and $\delta_f^{-1}$ are algebraic integers.
				If $\delta_f$ is a rational number, $\delta_f^{-1}$ is also a rational number,
				so $\delta_f$ and $\delta_f^{-1}$ are integers.
				Hence we get $\delta_f = \delta_f^{-1} = 1$.
				This is contradiction.
				Since $f$ is an isomorphism,
				the constant term of the characteristic polynomial of $f^\ast |_{H^2(X,\R)}$ is $1$.
				Since the minimal polynomial of $\delta_f$ have integer coefficients and
				divides the characteristic polynomial of $f^\ast |_{H^2(X,\R)}$,
				the constant term of the minimal polynomial of $\delta_f$ is also $1$.
				Since $|\lambda_i| = 1$ for $1\leq i \leq \dim H^2(X,\R) -2$,
				the number $\delta_f^{-1}$ must be a Galois conjugate of $\delta_f$ over $\Q$.

		\Part{(c)} See \cite[Proposition 2.5]{Kaw08}.

		\Part{(d)} Let $F$ be the Galois closure of $\Q(\delta_f) / \Q$.
				Let $\sigma \in \Gal(F/ \Q)$ be an automorphism sending $\delta_f$ to $\delta_f^{-1}$.
				Then since $D^+$ and $D^-$ lie in nef classes in $\N^1(X) \otimes_{\Z} \Q(\delta_f)$,
				we have $(C \cdot D^{\bullet}) \in \Q(\delta_f) (\subset F)$ for any curve $C$ in $X$ and for $\bullet \in \{ +, -\}$.
				Since $\delta_f$ and $\delta_f^{-1}$ are Galois conjugate over $\Q$,
				we have $\sigma D^+ = \sigma D^-$, so $\sigma(C\cdot D^{+}) = (C\cdot D^{-})$.

		\Part{(e), (f)} See \cite[Proposition 3.1]{Kaw08}

		\Part{(g)} If $(C \cdot D) = 0$ holds, we have $(C\cdot D^\bullet ) = 0$ for $\bullet \in \{ +, - \}$ since $D^+$ and $D^-$ are nef.
				If $(C\cdot D^+) = 0$ holds, we get $(C\cdot D^-) = 0$ by (d), so $(C\cdot D) = (C\cdot D^+) + (C\cdot D^-) = 0$.
				If $(C\cdot D^-) = 0$ holds, the similar is true.
	\end{parts}
\end{proof}

\begin{thm} \label{thm: classification of auto}
	If $X$ is a smooth projective surface admitting an automorphism $f$ with $\delta_f > 1$,
	then $X$ is either a non-minimal rational surface, or a surface birational to a K$3$ surface, an Enriques surface, or an abelian surface.
\end{thm}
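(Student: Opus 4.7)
My plan is to stratify the smooth projective surfaces $X$ by their Kodaira dimension $\kappa(X) \in \{-\infty, 0, 1, 2\}$, which is a birational invariant, and to exclude every case not appearing in the conclusion. The desired conclusion allows exactly $\kappa(X) = -\infty$ with $X$ non-minimal rational, or $\kappa(X) = 0$ with minimal model a K$3$, Enriques, or abelian surface. So I must rule out $\kappa(X) \in \{1,2\}$ outright, the bi-elliptic subcase of $\kappa(X) = 0$, and, inside $\kappa(X) = -\infty$, both the minimal-rational case and the case where $X$ is birational to a geometrically ruled surface over a curve of genus $\geq 1$. Throughout I will freely replace $f$ by an iterate $f^N$, since $\delta_{f^N} = \delta_f^N > 1$ as well.

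The case $\kappa(X) = 2$ is immediate: a surface of general type has finite automorphism group (a pluricanonical system $|mK_X|$ embeds $X$ birationally for $m \gg 0$, and $\Aut(X)$ acts faithfully on $H^0(X, mK_X)$), so $f$ has finite order and $\delta_f = 1$. For $\kappa(X) = 1$, the Iitaka fibration produces a canonical elliptic fibration $\pi \colon X \to B$ preserved by every automorphism, and $f$ descends to some $g \in \Aut(B)$ with $\pi \circ f = g \circ \pi$, restricting on a general fiber to an isogeny of elliptic curves. Both $\delta_g = 1$ (automorphism of a curve) and the fiberwise dynamical degree equal $1$, so the Dinh--Nguyen product formula in an equivariant fibration gives $\delta_f = 1$, a contradiction.

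For $\kappa(X) = 0$, the Enriques--Kodaira classification says the minimal model $X_{\min}$ is K$3$, Enriques, abelian, or bi-elliptic; only the bi-elliptic option must be excluded. Since $f$ permutes the finite collection of $(-1)$-curves contracted by $X \to X_{\min}$, some iterate $f^N$ descends to an automorphism of $X_{\min}$. In the bi-elliptic case the Albanese morphism $X_{\min} \to \Alb(X_{\min})$ is a smooth elliptic fibration onto an elliptic curve and is canonical, hence $f^N$-equivariant, so the same fibered argument forces $\delta_f = 1$. For $\kappa(X) = -\infty$ birational to a ruled surface over a curve $C$ of genus $\geq 1$, the Albanese $X \to \Alb(X)$ factors through a canonical map $X \to C$ that is preserved by some iterate of $f$, and again $\delta_f = 1$. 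If instead $X$ is itself minimal rational, i.e.\ $\PP^2$ or a Hirzebruch surface $\mathbb{F}_n$, then $\rank \NS(X) \leq 2$ and $\Aut(X)$ acts on $\NS(X)$ either trivially (on $\PP^2$) or through a finite quotient (on $\mathbb{F}_n$, since it must preserve the unique $\PP^1$-ruling and, when $n > 0$, the unique negative section), so $\rho(f^\ast|_{\NS(X)_{\R}}) = \delta_f = 1$.

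The main obstacle is the common fibered-case core ($\kappa = 1$, bi-elliptic, and irrationally-ruled): I need to produce a canonical fibration $X \to B$ that some iterate of $f$ must preserve (arising from a birational invariant such as the pluricanonical, Albanese, or MRC map) and then deduce $\delta_f = 1$ from the fibered structure, either by invoking the Dinh--Nguyen product formula or by directly checking that $(f^n)^\ast \pi^\ast H_B \cdot \pi^\ast H_B = 0$ and that $(f^n)^\ast H \cdot \pi^\ast H_B$ grows at most polynomially for an auxiliary ample $H$. Everything else is either trivial (the $\kappa = 2$ case) or a small linear-algebra check on a rank-$\leq 2$ Néron--Severi lattice.
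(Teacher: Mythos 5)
Your proposal is essentially correct, but it takes a different route from the paper in the most literal sense: the paper does not reprove this statement at all, it simply cites Cantat (\cite[Proposition 1]{Can99}), whereas you reconstruct a proof of Cantat's theorem. Your skeleton is the standard one and it works: Kodaira-dimension casework, finiteness of $\Aut$ in general type, and the principle that an automorphism with $\delta_f>1$ cannot preserve a fibration over a curve, applied to the canonical fibrations (Iitaka for $\kappa=1$, Albanese for bi-elliptic and for irrationally ruled surfaces), plus the finite action on the rank-$\le 2$ N\'eron--Severi lattice for minimal rational surfaces. A few points are imprecise but not fatal: the restriction of an automorphism to a general fiber is an isomorphism onto another fiber, not an isogeny (this is what makes the relative dynamical degree equal to $1$); on $\mathbb{F}_0=\PP^1\times\PP^1$ the ruling is not unique, though $\Aut$ still acts on $\NS$ through a finite group by permuting the two extremal nef rays; and for $\kappa\ge 0$ you do not need to pass to an iterate or argue via permutation of $(-1)$-curves, since uniqueness of the minimal model makes $\mu\circ f\circ\mu^{-1}$ a birational self-map of $X_{\min}$, hence an automorphism, for $f$ itself. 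Also note that all three fibered cases can be handled uniformly and without invoking the Dinh--Nguyen product formula: equivariance gives $f^*[F]\equiv[F]$ for the fiber class, pairing with the nef eigenclasses $D^{\pm}$ (as in Lemma \ref{lem: eigen}) forces $([F]\cdot D^{\pm})=0$, and the light-cone (Hodge index) argument then makes $[F]$ proportional to both $D^+$ and $D^-$, a contradiction. What your approach buys is a self-contained argument over $\C$; what the paper's citation buys is brevity, since Cantat's result is exactly this statement.
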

\begin{proof}
See \cite[Proposition 1]{Can99}.
\end{proof}

\begin{prop}\label{prop: BT}
	Let $X$ be a K$3$ surface over a number field $K$ with an infinite group of automorphisms.
	Then there is a rational curve $C \subset X$ such that $\#\{g(C)\ |\ g \in \Aut(X)\} = \infty$.
\end{prop}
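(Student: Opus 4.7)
The plan is to produce a positive-entropy automorphism of $X$ and then combine the finiteness of periodic curves from Lemma~\ref{lem: eigen}(e)--(f) with the existence of at least one rational curve outside the $f$-periodic locus.

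First I would reduce to the case that there exists $f \in \Aut(X)$ with $\delta_f > 1$. By the global Torelli theorem for K3 surfaces, the natural representation $\Aut(X)\to O(\NS(X))$ has finite kernel, so its image is an infinite subgroup of the discrete orthogonal group $O(\NS(X))$. Since $\NS(X)$ is a hyperbolic lattice of signature $(1,\rho-1)$, an infinite subgroup of its orthogonal group either contains a hyperbolic isometry---producing an $f\in\Aut(X)$ with $\delta_f=\rho(f^\ast)>1$---or else fixes an isotropic class $[F]\in\NS(X)$ corresponding to an $\Aut(X)$-invariant elliptic fibration $\pi\colon X\to\PP^1$. In the latter case I would analyze multi-sections: infinitude of the Mordell--Weil group of $\pi$, together with the induced action of $\Aut(X)$, produces infinitely many rational curves forming a single $\Aut(X)$-orbit, following the Bogomolov--Tschinkel strategy.

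Now assume $f\in\Aut(X)$ satisfies $\delta_f>1$. By Lemma~\ref{lem: eigen}(e)--(f) the set $\mathcal{C}_f$ of $f$-periodic irreducible curves is finite. By the classical fact that a K3 surface with infinite automorphism group carries infinitely many rational curves (this is where Bogomolov--Tschinkel's result on K3 surfaces enters), one may select a rational curve $C\subset X$ with $C\notin\mathcal{C}_f$. Then by Lemma~\ref{lem: eigen}(f), $C$ is not $f$-periodic, so the forward orbit $\{f^n(C):n\geq 0\}$ consists of infinitely many pairwise distinct rational curves; this orbit is contained in $\{g(C):g\in\Aut(X)\}$, which is therefore infinite.

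The hardest point I expect is the ``no positive entropy'' scenario: if every element of $\Aut(X)$ has trivial dynamical degree, Lemma~\ref{lem: eigen} does not apply directly, and one must exploit the structure of an $\Aut(X)$-invariant elliptic fibration---Mordell--Weil translations, reducible fibres, and multi-sections---to locate a single rational curve whose $\Aut(X)$-orbit is infinite. Once this ``parabolic'' case is handled, the positive-entropy argument is essentially routine modulo the nontrivial input that a K3 surface with infinite automorphism group carries infinitely many rational curves.
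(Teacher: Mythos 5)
Your proposal is, in effect, an attempt to reconstruct Bogomolov--Tschinkel's argument: note that the paper itself proves this proposition by simply citing the proof of \cite{BT00} (Theorem 4.10). Your opening reduction is fine and standard: the kernel of $\Aut(X)\to O(\NS(X))$ is finite, and an infinite subgroup of the isometry group of the hyperbolic lattice $\NS(X)$ either contains a hyperbolic element (giving $f\in\Aut(X)$ with $\delta_f>1$) or fixes a rational isotropic nef ray, giving an $\Aut(X)$-invariant genus-one fibration $\pi\colon X\to\PP^1$. The genuine gap is the parabolic case, which you yourself flag as the hardest point but then only sketch. You assert ``infinitude of the Mordell--Weil group of $\pi$'' without justification (this requires the finiteness of the image of $\Aut(X)$ in the automorphisms of the base and the structure of fiberwise automorphisms, and if $\pi$ has no section one must work with the Jacobian fibration, whose Mordell--Weil group acts on $X$ by translations), and, more seriously, you never actually produce the rational curve: if $\pi$ has a section, sections are rational $(-2)$-curves and translation by a non-torsion section yields an infinite orbit, but if $\pi$ has no section one needs a rational multisection moved infinitely by the translations, and constructing rational multisections on elliptic K3 surfaces is exactly the hard content of \cite{BT00} that you defer to with ``following the Bogomolov--Tschinkel strategy''. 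As written, that case is not proved.

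In the hyperbolic case your argument is logically sound but leans on the input ``a K3 surface with infinite automorphism group carries infinitely many rational curves'', attributed to Bogomolov--Tschinkel. In this setting that statement is essentially equivalent to the proposition being proved (an infinite orbit of a rational curve gives infinitely many rational curves, and your own deduction gives the converse), and its standard proof in the positive-entropy case is the very orbit argument at hand, so the citation is uncomfortably close to circular. A cleaner input is the Bogomolov--Mumford theorem: every ample complete linear system on a projective K3 surface contains a member which is a sum of rational curves; since every $f$-periodic curve is orthogonal to $D^+$ by Lemma \ref{lem: eigen}, while an ample class has positive intersection with $D^+$, some rational component of such a member is not $f$-periodic, and then your forward-orbit argument finishes the hyperbolic half. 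Even with this repair, the parabolic half still requires either the actual Bogomolov--Tschinkel multisection construction or, as the paper does, a direct appeal to the proof of \cite{BT00}, Theorem 4.10.
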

\begin{proof}
	See the proof of \cite[Theorem 4.10]{BT00}.
\end{proof}

\begin{prop}\label{prop: rational curve}
	Let $X$ be a K$3$ surface defined over a number field $K$,
	and $f\colon X\longrightarrow X$ an automorphism with $\delta_f > 1$.
	Then there exists a rational curve $C \subset X$ such that $\#\{ f^n(C) \ |\ n \geq 0\} = \infty$
\end{prop}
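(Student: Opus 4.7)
The plan is to combine Proposition~\ref{prop: BT} with the finiteness of $f$-periodic curves provided by Lemma~\ref{lem: eigen}, via a simple pigeonhole argument.

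First, I would observe that since $\delta_f > 1$, the automorphism $f$ has infinite order: an automorphism of finite order acts with finite order on $\NS(X)_\R$ and hence has spectral radius $1$, contradicting $\delta_f > 1$. In particular $\Aut(X)$ is infinite, so Proposition~\ref{prop: BT} applies and yields a rational curve $C_0 \subset X$ whose $\Aut(X)$-orbit
\[
\{ g(C_0) \mid g \in \Aut(X) \}
\]
is infinite. Note that each $g(C_0)$ is rational since $g$ restricts to an isomorphism from $C_0$ onto its image.

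Next, I would invoke Lemma~\ref{lem: eigen}(e) and (f): the set $\mathcal{C}_f$ of $f$-periodic irreducible curves on $X$ is finite. Since $\{g(C_0) \mid g \in \Aut(X)\}$ is infinite while $\mathcal{C}_f$ is finite, there exists $g_0 \in \Aut(X)$ with $C := g_0(C_0) \notin \mathcal{C}_f$. This curve $C$ is rational and not $f$-periodic. For the automorphism $f$, a finite forward orbit forces periodicity: if $f^m(C) = f^n(C)$ with $m > n$, applying $f^{-n}$ gives $f^{m-n}(C) = C$. Hence $\#\{f^n(C) \mid n \geq 0\} = \infty$, as required.

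I do not expect any real obstacle in this argument: once Proposition~\ref{prop: BT} and the classification of periodic curves in Lemma~\ref{lem: eigen} are granted, the proof is essentially a one-line counting argument. The only point requiring mild care is the equivalence between periodicity and finiteness of the forward orbit, which uses invertibility of $f$.
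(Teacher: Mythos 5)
Your argument is correct and is essentially the paper's proof: both use Proposition~\ref{prop: BT} to produce infinitely many rational curves and Lemma~\ref{lem: eigen}(e),(f) to know the $f$-periodic curves are finite in number, hence some rational curve is not $f$-periodic and so has infinite forward orbit. You merely make explicit two points the paper leaves implicit (that $\delta_f>1$ forces $\Aut(X)$ to be infinite so that Proposition~\ref{prop: BT} applies, and that for an automorphism a finite forward orbit implies periodicity), which is fine.
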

\begin{proof}
	Since $X$ contains only finitely many $f$-periodic curves by Lemma \ref{lem: eigen}{(e),(f)},
	and there are infinitely many rational curves in $X$ by Proposition \ref{prop: BT},
	there is a rational curve $C\subset X$ which is not $f$-periodic.
\end{proof}

\begin{prop}\label{prop: dag}
	Let $X$ be a projective variety and $f: X \longrightarrow X$ a surjective morphism with $\delta_f>1$.
	Assume the following condition:

		\vspace{0.15in}
		\rm (\dag): \it There is a nef $\R$-Cartier divisor $D$ on $X$ such that
		$f^*D \sim_\R \delta_fD$ and
		for any proper closed subset $Y \subset X_{\overline K}$,
		there exists a non-constant morphism $g: \PP^1_K \longrightarrow X$ such that $g(\PP^1_K) \not\subset Y$
		and $g^*D$ is ample.
		\vspace{0.15in}

	Then $(X,f)$ satisfies the condition $(DR)_K$.
\end{prop}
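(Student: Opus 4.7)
The plan is to combine the Call-Silverman construction of a canonical height associated to $D$ with the geometric input of \textup{(\dag)}. First, since $f^\ast D\sim_{\R}\delta_f D$ and $\delta_f>1$, the standard telescoping argument (valid verbatim for $\R$-Cartier divisors) produces a canonical height $\hat h_D\colon X(\overline K)\to\R$ satisfying $\hat h_D\circ f=\delta_f\hat h_D$ and $\hat h_D=h_D+O(1)$, where $h_D$ is any fixed Weil height for $D$. Nefness of $D$ is not used here; it enters only through \textup{(\dag)}.

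I would next show that $\hat h_D(x)>0$ already forces $\alpha_f(x)=\delta_f$. Choose an ample divisor $H$ on $X$; since $cH-D$ is ample for $c\gg 0$, the height machine yields $h_D\leq c\,h_H+O(1)$, so whenever $\hat h_D(x)>0$
\[
c\,h_H(f^n x)\geq h_D(f^n x)-O(1)=\delta_f^n\hat h_D(x)+O(1).
\]
Taking $n$-th roots and letting $n\to\infty$ gives $\liminf_{n\to\infty}h_H(f^n x)^{1/n}\geq\delta_f$, so $\alpha_f(x)\geq\delta_f$; combined with the Kawaguchi-Silverman upper bound $\alpha_f(x)\leq\delta_f$ recalled in the introduction, this forces equality.

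The heart of the argument is then to use \textup{(\dag)} to produce abundant $K$-rational points with $\hat h_D>0$. For any proper closed $Y\subsetneq X_{\overline K}$, pick $g\colon \PP^1_K\to X$ with $g(\PP^1_K)\not\subset Y$ and $g^\ast D$ ample, and set $d=\deg(g^\ast D)>0$. Writing $h_{\PP^1}$ for the standard Weil height on $\PP^1$, functoriality of heights combined with $\hat h_D=h_D+O(1)$ gives
\[
\hat h_D(g(p))=d\cdot h_{\PP^1}(p)+O(1),
\]
which tends to $+\infty$ as $h_{\PP^1}(p)\to\infty$. Since $g^{-1}(Y)$ is a finite subset of $\PP^1$, any $p\in\PP^1(K)$ of sufficiently large height lying outside $g^{-1}(Y)$ yields a point $x=g(p)\in X(K)\setminus Y$ with $\alpha_f(x)=\delta_f$.

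The final step is a diagonal upgrade, along the lines of Lemma \ref{lem: genefin reduction}\textup{(a)}, to a Zariski dense $S\subset X(K)$ with pairwise disjoint forward orbits. Enumerating a countable family $\{Y_j\}$ of proper closed subvarieties that witnesses Zariski density (for instance the vanishing loci of a countable collection of regular functions on a fixed affine cover of $X$, available since $K$ is countable), one picks at stage $j$ a morphism $g_j$ from \textup{(\dag)} for $Y_j$ and a point $p_j\in\PP^1(K)$ of height large enough that $\hat h_D(g_j(p_j))>0$. The only nontrivial point is orbit disjointness: if $f^a(g_j(p_j))=f^b(x_i)$ for some $i<j$, then $\hat h_D(g_j(p_j))=\delta_f^{b-a}\hat h_D(x_i)$, so it suffices to pick $p_j$ so that $d_j\,h_{\PP^1}(p_j)+O(1)$ avoids the countable set $\{\delta_f^n\hat h_D(x_i):i<j,\,n\in\Z\}$. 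Since these forbidden values are geometrically spaced in $\R$, they excise bounded intervals around geometrically sparse heights, and Northcott guarantees infinitely many admissible $p_j\in\PP^1(K)$ in every sufficiently high height window. The essential conceptual content sits in the height identity $\hat h_D(g(p))=d\,h_{\PP^1}(p)+O(1)$; I expect the diagonal construction and orbit-disjointness bookkeeping to be the only mildly technical aspects.
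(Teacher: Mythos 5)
Your argument is correct. Note, however, that the paper does not prove Proposition \ref{prop: dag} at all: it simply cites \cite[Theorem 4.1]{SS20}, so what you have written is a self-contained reconstruction of that result. Your route is the expected one: build the Call--Silverman canonical height $\hat h_D$ from $f^\ast D\sim_\R\delta_f D$, show $\hat h_D(x)>0$ forces $\alpha_f(x)=\delta_f$ via the comparison $h_D\le c\,h_H+O(1)$ and the Kawaguchi--Silverman upper bound, use (\dag) to get $K$-points $g(p)$ with $\hat h_D(g(p))=d\,h_{\PP^1}(p)+O(1)$ large and off a prescribed closed set, and run a diagonal construction in which orbit disjointness is enforced by making $\hat h_D(x_j)$ avoid the sets $\{\delta_f^m\hat h_D(x_i):m\in\Z\}$. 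All the key steps are sound, including the observation that $g^{-1}(Y)$ is finite and that a collision $f^a(x_j)=f^b(x_i)$ forces $\hat h_D(x_j)=\delta_f^{b-a}\hat h_D(x_i)$.

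One small imprecision: Northcott gives \emph{finiteness} of points of bounded height, so it is not the right tool to guarantee admissible $p_j$ in a fixed height window; what you actually need is that heights of $K$-points of $\PP^1$ (already of $\Q$-points $[n:1]$, with heights $\log n$) hit every interval of fixed positive length once the height is large, and that, since $g_j^{-1}(Y_j)$ is finite and there are infinitely many disjoint admissible windows, you can always pick one suitable $p_j$. With that phrasing repaired, the bookkeeping goes through and the proof is complete.
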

\begin{proof}
See \cite[Theorem 4.1]{SS20}.
\end{proof}

\begin{proof}[Proof of Theorem \ref{thm: main} when $f$ is an automorphism]
	If $X$ is rational, our assertion is true by \cite{SS20}.
	Assume that $X$ is irrational.
	Take a birational morphism $\mu \colon X \longrightarrow Y$ to
	the minimal model $Y$ of $X$ and
	let $g : Y \dashrightarrow Y$ be the birational automorphism on $Y$
	defined as $g = \mu \circ f \circ \mu^{-1}$.
	Then $g$ is in fact an automorphism
	since, if $g$ has indeterminacy, $Y$ must have a $K_Y$-negative curve.
	By Theorem \ref{thm: classification of auto} and Lemma \ref{lem: genefin reduction},
	we may assume that $X$ is either a K$3$ surface, an Enriques surface, or an abelian variety.
	If $X$ is an abelian variety, our assertion is true by \cite{SS20}.
	If $X$ is an Enriques surface, take the universal covering $\pi \colon \tilde{X} \longrightarrow X$.
	Then an automorphism $\tilde{f} \colon \tilde{X} \longrightarrow \tilde{X}$ such that
	$\pi \circ \tilde{f} = f\circ \pi$ is induced and $\tilde{X}$ is a K$3$ surface.
	Hence by Lemma \ref{lem: genefin reduction}, we may assume that $X$ is a K$3$ surface.
	
	Now it is enough to prove that \rm(\dag) in Proposition \ref{prop: dag}.
	Let $Y$ be a proper closed subset of $X$.
	Take a rational curve $\iota \colon C \hookrightarrow X$
	suth that $\# \{ f^n \circ \iota (C)\ |\ n \geq 0\} = \infty$ by Proposition \ref{prop: rational curve}.
	Then there is an integer $n_Y \geq 0$ such that $C_Y := f^{n_Y} \circ \iota (C) \not \subset Y$.
	Let $g_Y := f^{n_Y} \circ \iota$.
	Since $C_Y$ is not $f$-periodic, we have $(C_Y\cdot D^+) > 0$ by Lemma \ref{lem: eigen} (e), (f),
	so $g_Y^{\ast} D^+$ is ample.
	Hence \rm(\dag) in Proposition \ref{prop: dag} is proved.
\end{proof}

%%%%%%%%%%%%%%%%%%%%%%%%%%%%
%同型でない自己全射に関する主節
%%%%%%%%%%%%%%%%%%%%%%%%%%%%

\subsection{The non-isomorphic surjective self-morphism case}\label{subsec: non-isom}
We prepare the following lemmata to reduce to a minimal surface.

\begin{lem}\label{lem: kappa-inf}
Let $f\colon X\longrightarrow X$ be a non-isomorphic surjective self-morphism
on a smooth projective irrational surface $X$ with $\kappa(X) = -\infty$.
Then there is a positive integer $t$,
a birational morphism $\mu \colon X \longrightarrow X'$ to a $\PP^1$-bundle over a curve $B$ with $g(B)\geq 1$,
and a surjective self-morphism $f'\colon X'\longrightarrow X'$
such that the equality $\mu \circ f^t = f' \circ \mu$ holds.
\end{lem}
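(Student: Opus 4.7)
My plan is to exhibit an $f$-equivariant fibration $\pi\colon X \to B$ onto a curve $B$ of positive genus, identify the relative minimal model of $\pi$ as a $\PP^1$-bundle $X'\to B$, and descend a suitable iterate $f^t$ of $f$ to $X'$.

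First, I would invoke the Enriques--Kodaira classification: since $X$ is smooth projective, irrational, with $\kappa(X)=-\infty$, $X$ is birational to $\PP^1\times C$ for some smooth projective curve $C$ with $g(C)\ge 1$. The Albanese map $\alpha_X\colon X\to \Alb(X)$ then has $1$-dimensional image (a translate of the Abel--Jacobi image of a smooth projective curve $B$ isomorphic to $C$), and taking the Stein factorization yields a fibration $\pi\colon X\to B$ with $g(B)\ge 1$ and general fiber $\PP^1$. By functoriality of the Albanese, $f$ induces a surjective endomorphism of $\Alb(X)$ that preserves this Abel--Jacobi image, hence gives a surjective $f_B\colon B\to B$ with $\pi\circ f = f_B\circ \pi$. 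Since $g(B)\ge 1$, every rational curve in $X$ (in particular every $(-1)$-curve) is contracted by $\pi$ and therefore lies in a fiber; iteratively contracting $(-1)$-curves in fibers produces a birational morphism $\mu\colon X\to X'$ onto a relatively minimal model $\pi'\colon X'\to B$, which is a $\PP^1$-bundle. In particular, $X$ has only finitely many $(-1)$-curves, all supported over the finite subset $\Sigma\subset B$ of points where $\pi$ has reducible fibers.

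For the descent of $f^t$ to $X'$, the equivariance $\pi\circ f = f_B\circ \pi$ forces $f$ to send each irreducible component of a reducible fiber either to another such component (mapping surjectively onto it) or to a point. Hence $f$ induces a partial self-map on the finite set $\mathcal F$ of components of reducible fibers of $\pi$. Some iterate $f^t$ then permutes the subset of $\mathcal F$ of non-eventually-contracted components and preserves the dual-graph structure of each reducible fiber; from this I would deduce $f^t(\Exc(\mu))\subseteq \Exc(\mu)$, yielding the desired factorization $\mu\circ f^t = f'\circ \mu$ for some surjective $f'\colon X'\to X'$. The hardest part will be this last step, since $f$ need not preserve the self-intersection $-1$ of $(-1)$-curves and may contract individual fiber components outright; the argument relies on the finiteness of $\mathcal F$, the $\pi$-equivariance, and a combinatorial analysis of the dual graphs of reducible fibers after sufficient iteration, together with the flexibility to adjust $X'$ by elementary transformations if necessary.
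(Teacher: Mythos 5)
Your setup (Albanese fibration, all rational curves vertical, contraction to a relatively minimal $\PP^1$-bundle over a curve of genus $\geq 1$) matches the standard reduction, but the proof has a genuine gap exactly where you admit it is hardest: you never actually establish that some iterate $f^t$ satisfies $f^t(\Exc(\mu))\subseteq\Exc(\mu)$. The paper gets this in one stroke from Nakayama's result (\cite[Proposition 10]{Nak02}) that every $(-1)$-curve on $X$ is $f$-periodic; since there are only finitely many $(-1)$-curves (all vertical, supported on the finitely many reducible fibers), a suitable iterate maps each one onto itself, one contracts a $(-1)$-curve, the iterate descends, and one repeats on the blown-down surface until reaching the $\PP^1$-bundle. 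Your substitute --- ``a combinatorial analysis of the dual graphs of reducible fibers after sufficient iteration'' --- does not obviously close this: the $\pi$-equivariance $\pi\circ f=f_B\circ\pi$ only tells you that $f$ maps the fiber $X_b$ onto $X_{f_B(b)}$, and a priori nothing prevents a $\mu$-exceptional component of a reducible fiber from mapping onto a curve that is \emph{not} $\mu$-exceptional (for instance onto an irreducible fiber of $\pi$, or onto the unique non-contracted component of another reducible fiber), in which case no iterate descends. Ruling this out is precisely the content of Nakayama's periodicity statement, which rests on properties of finite endomorphisms of ruled surfaces rather than on the dual-graph combinatorics alone, so this step needs either a citation or a genuine argument.

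Two smaller points. First, your concern that $f$ ``may contract individual fiber components outright'' is vacuous: a surjective self-morphism of a projective variety is finite (if $f(C)$ were a point, pick $D$ with $f^*D\equiv H$ ample, using that $f^*$ is bijective on $\N^1(X)_\R$, and get $0<(H\cdot C)=(D\cdot f_*C)=0$); but you neither use nor prove this, and it does not by itself repair the descent issue above. Second, the proposed fallback of ``adjusting $X'$ by elementary transformations'' is not available: an elementary transformation of the $\PP^1$-bundle is only a birational map, and the statement requires a birational \emph{morphism} $\mu\colon X\to X'$ through which $f^t$ factors, so you cannot trade the failure of $f^t$-invariance of $\Exc(\mu)$ for a different model without losing the morphism $\mu$.
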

\begin{proof}
By \cite[Proposition 10]{Nak02}, any $(-1)$-curve on $X$ is $f$-periodic.
So the assertion follows.
\end{proof}

\begin{lem}\label{lem: kappa0}
Let $f\colon X \longrightarrow X$ be a non-isomorphic surjective self-morphism on a smooth projective surface $X$ with $\kappa(X) \geq 0$.
Then $X$ is minimal.
\end{lem}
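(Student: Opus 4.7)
The plan is to assume for contradiction that $X$ is not minimal and derive a contradiction by showing $f$ must be an isomorphism. If $X$ is not minimal, then the contraction to the minimal model $\mu\colon X \to X_{\min}$ is a composition of $k \geq 1$ blowups of points. The first step is the ramification formula $K_X = f^*K_X + R$, where the class $R = K_X - f^*K_X$ is represented by an effective divisor; this effectivity follows from Stein-factorising $f$ as a birational morphism followed by a finite surjection and combining the exceptional divisor of the former with the ramification divisor of the latter.

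The central step is to show $R = 0$. Iterating the ramification formula yields $K_X = (f^n)^*K_X + R_n$ with $R_n = \sum_{i=0}^{n-1}(f^i)^*R \geq 0$. Since $\kappa(X) \geq 0$, I may pick $m \geq 1$ and an effective $D_0 \in |mK_X|$. Then $(f^n)^*D_0 + m R_n$ is effective and linearly equivalent to $mK_X$. Fixing an ample divisor $H$ and intersecting gives
\[
m R_n \cdot H \leq m K_X \cdot H,
\]
that is, $R_n \cdot H \leq K_X \cdot H$, a bound independent of $n$. On the other hand, if $R \neq 0$ then each $(f^i)^*R$ is a nonzero effective Cartier divisor, so $(f^i)^*R \cdot H$ is a positive integer, hence $\geq 1$; summing yields $R_n \cdot H \geq n$, contradicting the above bound as soon as $n > K_X \cdot H$. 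Therefore $R = 0$. Since $R$ absorbs both divisorial ramification and contracted divisors, $f$ contracts no divisor (hence is proper and quasi-finite, so finite) and is unramified in codimension $1$; Zariski-Nagata purity of the branch locus then forces $f$ to be étale.

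To conclude, since $f$ is an étale self-cover of degree $d > 1$, multiplicativity of the topological Euler characteristic gives $\chi_{\mathrm{top}}(X) = d \cdot \chi_{\mathrm{top}}(X)$, so $\chi_{\mathrm{top}}(X) = 0$. On the other hand, blowing up a point raises $\chi_{\mathrm{top}}$ by $1$, so $\chi_{\mathrm{top}}(X) = \chi_{\mathrm{top}}(X_{\min}) + k$, and the Enriques-Kodaira classification gives $\chi_{\mathrm{top}}(X_{\min}) \geq 0$ for every minimal smooth projective surface with $\kappa \geq 0$ (abelian, bielliptic, K$3$, Enriques, properly elliptic, or minimal general type). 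Combining, $0 = \chi_{\mathrm{top}}(X) \geq k \geq 1$, the desired contradiction. The main technical obstacle is the vanishing $R = 0$; once it is in hand, purity of the branch locus and the Euler-characteristic comparison via the classification finish the proof cleanly.
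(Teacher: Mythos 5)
Your proposal is correct, but it is not comparable to an argument in the paper itself: the paper proves this lemma purely by citation (``see \cite[Lemma 2.3 and Proposition 3.1]{Fuj02}''), so what you have written is essentially a self-contained reconstruction of Fujimoto's result, and it follows the standard route -- show that a non-isomorphic surjective endomorphism of a surface with $\kappa(X)\geq 0$ must be \'etale, by playing the growth of the iterated ramification divisors $R_n=\sum_{i<n}(f^i)^*R$ against the boundedness of $R_n\cdot H$ forced by an effective member of $|mK_X|$, and then rule out non-minimality by comparing topological Euler characteristics of $X$, its \'etale self-cover, and the minimal model. Two small points could be tightened. First, the effectivity of $R=K_X-f^*K_X$ is better justified directly by the nonzero map $f^*\Omega^2_X\to\Omega^2_X$ (the Jacobian determinant), whose divisor of zeros is exactly $R$; your Stein-factorization phrasing is shakier because the intermediate surface need not be smooth, so writing a discrepancy formula through it requires extra care. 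Second, once you know $R=0$ you do not need Zariski--Nagata purity: $R$ is the zero divisor of the Jacobian section, so $R=0$ means $df$ is invertible everywhere, hence $f$ is \'etale (and in particular quasi-finite, hence finite) on the nose. Finally, since the paper works over a number field, the Euler-characteristic step should be read after base change to $\C$; this is harmless, as a nontrivial birational $K$-morphism becomes a composition of $k\geq 1$ point blowups geometrically, and your inequality $\chi_{\mathrm{top}}\geq 0$ for minimal surfaces with $\kappa\geq 0$ is correct case by case. With these cosmetic repairs your argument is a complete and valid proof of the cited statement.
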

\begin{proof}
See  \cite[Lemma 2.3 and Proposition 3.1]{Fuj02}.
\end{proof}

\begin{proof}[Proof of Theorem \ref{thm: main} when $f$ is not an automorphism]
We prove it by using  the Enriques--Kodaira Classification and case-by-case analysis.
\begin{parts}
\Part{(I)} $\kappa(X) = - \infty$.
		By Lemma \ref{lem: kappa-inf}, Lemma \ref{lem: genefin reduction}, and Lemma \ref{lem: iterate reduction},
		we may assume that $X$ is either a rational surface or a $\PP^1$-bundle over a curve $B$ with genus $g(B) \ge 1$.
		If $X$ is rational, the assertion follows by \cite[Theorem 1.11]{SS20}.
		If $g(B) \geq 2$, $X$ does not have potentially dense rational points.
		If $g(B) = 1$, $X$ is a $\PP^1$-bundle over an elliptic curve.
		This case is proved in \cite[Theorem 6.1]{SS20}.
\end{parts}
	
From now on, assume that $\kappa(X) \geq 0$. Then $X$ is minimal by Lemma \ref{lem: kappa0}.

\begin{parts}
\Part{(II)} $\kappa(X) = 0$.
By \cite[Theorem 3.2]{Fuj02}, $X$ is either a hyperelliptic surface or an abelian surface.

		\begin{itemize}
			\item[(II-i)] The hyperelliptic surface case.
					In this case, there is an equivariant finite covering from an abelian variety.
					See e.g.~the proof of \cite[Theorem 7.1]{MSS18} for details.
					By taking such an equivariant finite covering and applying Lemma \ref{lem: genefin reduction},
					we can reduce to the abelian surface case.
					
			\item[(II-ii)] The abelian surface case. More generally the abelian 
			variety case is proved in \cite[Theorem 1.12]{SS20}.
		\end{itemize}

\Part{(III)} $\kappa(X) = 1$. We treat this case below.

\Part{(IV)} $\kappa(X) = 2$. This case does not occur since any surjective self-morphisms on $X$ are automorphisms of finite order.
\end{parts}

Thus the remaining case is the $\kappa(X)=1$ case.
Then $X$ admits an elliptic fibration $\pi \colon X\longrightarrow B$ and 
$f$ descends to an automorphism of finite order on $B$ 
(cf.~\cite[Theorem A]{MZ19} or \cite[Theorem 8.1]{MSS18}).
Replacing $f$ by some iteration $f^t$,
we may assume that $f$ is a morphism over the base curve $B$.
Let $\{ Y_i \}_{i = 1}^{\infty}$ be the set of all the proper closed subsets of $X$.
It is enough to get $\{x_i\}_{i=1}^{\infty} \subset X(L)$ such that
\begin{itemize}
	\item $x_i \not \in Y_i$,
	\item $\alpha_f(x_i) = \delta_f$ for $i = 1,2,\ldots$, and
	\item $O_f(x_i) \cap O_f(x_j) = \emptyset$ for $i\neq j$.
\end{itemize}
Let $L$ be a finite extension field of $K$ such that $X(L)$ is Zariski dense.
By Lemma \ref{lem: deinsity of ellipric surface},
we can find an infinite subset $\{b_j \}_{j=1}^{\infty} \subset B(L)$ such that $X_{b_j} = \pi^{-1} (b_j)$
contains infinitely many $L$-rational points for each $j$.
Removing special fibers, we may assume that $X_{b_j}$ is an elliptic curve and 
$f|_{X_{b_j}}$ has dynamical degree $\delta_f$.
By Lemma \ref{lem: polarized}, there are infinitely many $L$-rational points $x\in X_{b_j}(L)$ with $\alpha_f(x) = \delta_f$ for each $j$.
Hence, letting $\{b_{j_i}\}_{i=1}^{\infty}$ be a subsequence of $\{b_j \}_{j=1}^{\infty}$ such that $X_{b_{j_i}}$ is not contained $Y_i$,
we can find $x_i \in X_{b_{j_i}}(L)$ such that $x_i \not \in Y_i$.
Since $f$ is a morphism over $B$, we have $O_f(x_k) \cap O_f(x_l) = \emptyset$ for $k \neq l$.
The assertion is proved.
\end{proof}

\end{document}